\let\mathcal\mathscr
\def\dra{\dashrightarrow}
\def\Alb{\mathop{\rm Alb}\nolimits}
\def\dim{\mathop{\rm dim}\nolimits}
\def\Ker{\mathop{\rm Ker}\nolimits}
\def\rank{\mathop{\rm rank}\nolimits}
\def\Pic{\mathop{\rm Pic}\nolimits}
\def\tilde{\widetilde}
\def\llra{\hbox to 12mm{\rightarrowfill}}
\def\moins{\mathop{\hbox{\vrule height 3pt depth -2pt
width 5pt}\,}}
\def\cO{{\mathcal O}}
\def\cQ{{\mathcal Q}}
\def\1Y{{Y^{'}}}
\def\KK{{\widetilde{K}}}
\newtheorem{theo}{Theorem}[section]
\newtheorem{prop}[theo]{Proposition}
\newtheorem{lemm}[theo]{Lemma}
\newtheorem{coro}[theo]{Corollary}
\newtheorem{defi}[theo]{Definition}
\newtheorem{rema}[theo]{Remark}
\newtheorem{exam}[theo]{Example}
\title{Varieties with $q(X)=\dim(X)$ and $P_2(X)=2$}
\date{\today}
\author{Zhi Jiang}
\email{jiang@dma.ens.fr}
\subjclass[2000]{14J10.}
\begin{document}
\maketitle

\begin{abstract}
 We give a complete description  of all smooth projective complex   varieties with $q(X)=\dim(X)$ and $P_2(X)=2$.
\end{abstract}
 
It has always been a goal of algebraic geometers to classify algebraic varieties and a good part of their work in the   twentieth century was devoted to the classification of algebraic surfaces. Of course, one can obtain a complete description only  in particular cases, such as when the numerical invariants of the surface are small.

In the past thirty years, the development of new techniques made it possible to obtain results in higher dimensions as well. The numerical (birational) invariants of a smooth projective complex   variety $X$ that are commonly used are its {\em irregularity} $q(X):=h^1(X,\cO_X)$ and  its {\em plurigenera} $P_m(X):=h^0(X,\omega_X^m)$.

One can quote for example a beautiful result of Kawamata (\cite{KA}), who proved that  $X$ is birational to an abelian variety if and only if  $q(X)=\dim(X)$ and the Kodaira dimension   $\kappa(X)$ is 0 (this means  $\max_{m>0}P_m(X)=1$). This result has been improved on by many authors, and the optimal version can be found in \cite{CH1} and \cite{jiang}:  {\em $X$ is birational to an abelian variety if and only if  $q(X) = \dim(X)$, and $ P_2(X)=1$ or
$0<P_m(X)\leq m-2$ for some $m\geq 3$.}

When $q(X) = \dim(X)$, but the  numerical invariants of $X$ are a little bit higher than these bounds, one can still obtain a complete birational description of $X$. Hacon and Pardini treated the case $P_3(X) = 2$ and proved that  $X$ is birational to a smooth double cover of its Albanese variety $\Alb(X)$,
 with   explicit and very specific branch locus (\cite{HAC1}). Hacon then gave an equally precise description in the case $P_3(X) = 3$ ($X$ is birational to a smooth bidouble cover of   $\Alb(X)$) in \cite{Ha}, and Chen and Hacon dealt with the case $P_3(X) = 4$, where the description obtained is still complete but more complicated (\cite{CH4}).

In this article, following the strategy  of Chen and Hacon in \cite{CH4}, but building on the results of \cite{jiang}, we give   a complete birational description of $X$ in the case  $P_2(X)=2$ (Theorem \ref{5}).

\medskip\noindent{\bf Theorem } {\em
 Let $X$ be a smooth projective variety with
  $q(X)=\dim (X)$ and $P_2(X)=2$. Then $\kappa(X)=1$ and $X$ is birational to a
quotient $(K\times C)/G$, where $K$ is an abelian variety, $C$ is
a curve, $G$ is a finite group which acts diagonally and freely on
$K\times C$, and $C\rightarrow C/G$ is branched at 2 points.}
 \medskip

A main ingredient in the above classifications is to bound the possible Kodaira dimensions of $X$. In this direction, we have the following result (Theorem \ref{4.11}).

\medskip\noindent{\bf Theorem } {\em
Let $X$ be a smooth projective variety with $q(X)=\dim (X)$ and $0< P_m(X)\leq 2m-2$, for some $m\geq 4$. Then $\kappa(X)\leq 1$.}
 \medskip

Throughout this article, we work over the field of complex numbers.

\section{Preliminaries}

In this section we recall some definitions and prove preliminary results. Let $X$ be a smooth projective variety.

\subsection{Albanese variety}  There is an abelian variety $\Alb(X)$, called the {\em Albanese variety} of $X$, together with a morphism $a_X:X\to \Alb(X)$ called the {\em Albanese morphism} of $X$, which has a universal property for morphisms from $X$ to abelian varieties (\cite{Laz}, \S4.4). We say that $X$ has {\em maximal Albanese dimension} if $\dim(a_X(X))=\dim(X)$.
 We   recall a criterion for the surjectivity of the Albanese morphism (\cite{jiang}, Theorem 2.9).

\begin{theo}\label{J1}Let $X$ be a smooth projective variety. If $$0<P_m(X)\leq 2m-2,$$ for some $m\geq 2$, the
Albanese morphism $a_X: X\rightarrow \Alb(X)$ is surjective.
\end{theo}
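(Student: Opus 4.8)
The strategy is a proof by contradiction: assume $a_X\colon X\to A:=\Alb(X)$ is not surjective, and deduce that $P_m(X)=0$ or $P_m(X)\geq 2m-1$, contradicting the hypothesis $0<P_m(X)\leq 2m-2$.

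If $\kappa(X)=0$ this is immediate from Kawamata's theorem on varieties of Kodaira dimension zero, which asserts that $a_X$ is surjective (indeed with connected fibres). So assume $\kappa(X)\geq 1$ and, after replacing $X$ by a smooth birational model, take the Iitaka fibration to be a morphism $h\colon X\to Z$ with connected fibres, $\dim Z=\kappa(X)$, and general fibre $F$ with $\kappa(F)=0$. Since the general fibres of $h$ cover $X$, a nonzero section of $\omega_X^m$ cannot restrict to zero on a general $F$, so $P_m(F)=h^0(F,\omega_F^m)>0$; as $\kappa(F)=0$ this forces $P_m(F)=1$ and $h_*\omega_X^m$ to be a line bundle on $Z$. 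Applying Kawamata again to $F$, the Albanese map $a_F\colon F\to\Alb(F)$ is surjective, so the image $a_X(F)\subseteq A$ is a translate of the image $B$ of the induced homomorphism $\Alb(F)\to A$. A rigidity argument — the morphism $X\to A/B$ contracts every fibre of $h$, hence factors through $Z$ and then, by the universal property, through $\Alb(Z)$ — identifies $B$ with $\Ker(A\to\Alb(Z))$ and shows that $a_X(X)$ is exactly the preimage of $a_Z(Z)$ under the natural surjection $A\to\Alb(Z)$. Hence $a_X$ is surjective if and only if $a_Z\colon Z\to\Alb(Z)$ is; and from $q(X)=\dim X$ one checks $q(Z)=\dim\Alb(Z)\geq\dim Z$. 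It thus suffices to bound the plurigenera of the Iitaka base $Z$.

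The case $\dim Z=1$ is the base case: if $g(Z)\leq 1$ then $a_Z$ is surjective and we are done, while if $g(Z)\geq 2$ then, writing $h_*\omega_X^m=\omega_Z^m\otimes h_*\omega_{X/Z}^m$ with $h_*\omega_{X/Z}^m$ a line bundle of degree $\geq 0$ (semipositivity, Fujita--Kollár--Viehweg), Riemann--Roch on $Z$ together with $\deg\omega_Z^m=m(2g(Z)-2)$ gives
\[
P_m(X)=h^0(Z,h_*\omega_X^m)\ \geq\ \deg(h_*\omega_X^m)+1-g(Z)\ \geq\ (2m-1)(g(Z)-1)\ \geq\ 2m-1,
\]
which is exactly why the constant is $2m-2$ and no smaller. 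For $\dim Z\geq 2$ one argues by induction on $\dim X$: by the canonical bundle formula, $P_m(X)$ is governed by the orbifold pluricanonical system of a log-general-type pair $(Z,\Delta_Z)$, with $\Delta_Z$ recording the multiple fibres and discriminant of $h$, so when $0<\kappa(Z)<\dim Z$ the induction hypothesis applies to $(Z,\Delta_Z)$, and when $\kappa(Z)=0$ one invokes Kawamata once more. The remaining case — $Z$ (or already $X$) of general type, where the Iitaka fibration yields no further reduction — is the main obstacle: one must estimate $P_m$ directly for a variety (pair) of general type with non-surjective Albanese map, the prototype being a smooth ample divisor in an abelian variety. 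This is where the Fourier--Mukai / generic vanishing methods of Green--Lazarsfeld, Hacon and Chen--Hacon enter: $a_*\omega^m$ is a nonzero GV-sheaf supported on a proper subvariety that nonetheless generates the Albanese variety, and analyzing its cohomological support loci and Euler characteristic yields $P_m\geq 2m-1$. A secondary, purely technical, difficulty is propagating this estimate through the canonical bundle formula and the boundary $\Delta_Z$; note that the naive inequality $P_m(X)\geq P_m(Z)$ fails — $P_m(X)$ can even be $0$ when the general fibre of $h$ carries no pluricanonical form, which is why the hypothesis $P_m(X)>0$ is essential.
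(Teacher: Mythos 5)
This statement is not proved in the paper at all: it is quoted verbatim from \cite{jiang}, Theorem~2.9, so there is no in-paper argument to compare against line by line. Judged on its own terms, your proposal gets the easy reductions right (the case $\kappa(X)=0$ via Kawamata, the passage to the Iitaka base $Z$, and the curve computation $P_m(X)\geq (2m-1)(g(Z)-1)$, which correctly explains why the constant is $2m-2$), but it does not prove the theorem. The entire difficulty of the statement is the case you explicitly set aside as ``the main obstacle'': when $a_X(X)$ is a proper subvariety of $\Alb(X)$ that fibers over a positive-dimensional subvariety of general type of a quotient abelian variety (Kawamata--Ueno), one must actually produce $2m-1$ sections of $\omega_X^m$. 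Writing ``analyzing its cohomological support loci and Euler characteristic yields $P_m\geq 2m-1$'' is a restatement of the goal, not an argument. The real proof in \cite{jiang} (following \cite{CH1}) runs through the structure of $V_0(\omega_X)$ as a union of torsion translates of subtori (Green--Lazarsfeld, Simpson), the section-multiplication inequality $h^0(L\otimes M)\geq a+b+t-1$ (Lemma~\ref{J4} here) iterated $m-1$ times along a component $T\subset V_0(\omega_X)$ of dimension $\geq 2$, and a separate analysis when all such components are $1$-dimensional; none of that machinery appears in your sketch.

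Two further structural problems. First, your induction for $\dim Z\geq 2$ passes through the canonical bundle formula to a pair $(Z,\Delta_Z)$, but the statement being proved concerns smooth varieties, not pairs; the induction does not close unless you formulate and prove an orbifold version of the theorem, including control of the moduli part, which is itself a substantial task you do not address. Second, you invoke $q(X)=\dim X$ to deduce $q(Z)\geq \dim Z$, but the theorem carries no hypothesis on $q(X)$ --- it holds for an arbitrary smooth projective $X$ --- so that step imports an assumption that is not available. As it stands the proposal is a plausible outline whose hard case and whose inductive mechanism are both missing.
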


\subsection{Cohomological   loci}Let $X$ be a smooth projective variety and let $ F $ be a
coherent sheaf on $X$. The cohomological support loci of $F$ are
defined as $$V_i( F)=\{P\in \Pic^0(X)\mid H^i(X, F\otimes
P)\neq 0\}.$$

\subsection{Iitaka fibration}  We denote by $X\dra I(X)$ the {\em Iitaka fibration} of $X$, where $I(X)$ has dimension $\kappa(X)$  (\cite{Laz}, Definition 2.1.36). Given a surjective morphism $f: X\rightarrow Y$, we say  that {\em the Iitaka model of $X$ dominates $Y$}   if there exist  an integer $N>0$ and an ample divisor $H$ on $Y$ such that $NK_X   \succeq  f^*H$.

The following proposition  is \cite{jiang}, Lemma 2.2.

\begin{prop}\label{J2}Let
$f:X\rightarrow Y$ be a surjective morphism between smooth
projective varieties and assume that the Iitaka model of $X$
dominates $Y$. Fix a torsion element $Q\in \Pic^0(X)$ and an integer $m\geq 2$. Then
$h^0(X, \omega_X^{m}\otimes Q\otimes f^*P)$ is constant for all
$P\in \Pic^0(Y)$.
\end{prop}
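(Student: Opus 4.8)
The plan is to reduce the statement, via the Albanese map of $Y$ and the projection formula, to a cohomological vanishing on an abelian variety; the hypothesis on the Iitaka model is precisely what provides the positivity needed for that vanishing. Let $a_Y\colon Y\to B:=\Alb(Y)$ be the Albanese morphism and put $g:=a_Y\circ f\colon X\to B$. Since $a_Y^*\colon\Pic^0(B)\to\Pic^0(Y)$ is an isomorphism and $f^*\circ a_Y^*=g^*$, it suffices to show that $\bar P\mapsto h^0(X,\omega_X^m\otimes Q\otimes g^*\bar P)$ is constant on $\Pic^0(B)$. By the projection formula and the Leray spectral sequence this number equals $h^0(B,\cF\otimes\bar P)$, where $\cF:=g_*(\omega_X^m\otimes Q)$; hence it is enough to prove that $\cF$ is $\mathrm{IT}_0$ on $B$, i.e. $H^i(B,\cF\otimes\bar P)=0$ for all $i>0$ and all $\bar P\in\Pic^0(B)$, for then $h^0(B,\cF\otimes\bar P)=\chi(\cF\otimes\bar P)=\chi(\cF)$ is independent of $\bar P$.

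To obtain this I would use the hypothesis. Choose $N>0$ and an ample divisor $H$ on $Y$ with $NK_X\succeq f^*H$, that is $NK_X\sim f^*H+E$ with $E\geq0$ effective. Then
\[
(m-1)K_X\;\sim_{\Q}\;f^*A+D,\qquad A:=\tfrac{m-1}{N}H,\quad D:=\tfrac{m-1}{N}E\geq0,
\]
with $A$ an \emph{ample} $\Q$-divisor on $Y$ --- nonzero exactly because $m\geq2$. Therefore, after a log resolution of $(X,E)$ and after passing to the \'etale cover of $X$ trivialising the torsion class $Q$ (so that $\omega_X^m\otimes Q$ becomes a direct summand of a genuine pluricanonical sheaf upstairs), $\omega_X^m\otimes Q$ takes the shape $\omega_X\otimes\cO_X(\text{boundary})\otimes f^*(\text{ample class on }Y)$; equivalently, up to these operations $\cF$ is a direct summand of $a_{Y*}(\cH\otimes A)$ for a Koll\'ar-type (Hodge-theoretic) sheaf $\cH$ on $Y$. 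The hypothesis cannot be dropped: for $X=C\times E$ with $C$ a smooth curve of genus $\geq2$, $E$ an elliptic curve and $f$ the second projection, $\cF\cong\cO_E^{\oplus P_m(C)}$ is not $\mathrm{IT}_0$ and $h^0(X,\omega_X^m\otimes f^*P)$ jumps at $P=\cO_E$ --- consistently, $f=p_E$ does not satisfy the Iitaka-domination hypothesis.

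It remains to prove $H^i(B,\cF\otimes\bar P)=0$ for $i>0$, and this is the step I expect to be hardest. Since $A\otimes a_Y^*\bar P\equiv A$ is ample on $Y$ for every $\bar P\in\Pic^0(B)$, Koll\'ar-type vanishing for the sheaf $\cH$ (Koll\'ar, Ambro--Fujino) gives $H^i(Y,\cH\otimes A\otimes a_Y^*\bar P)=0$ for $i>0$, and the Koll\'ar decomposition $Ra_{Y*}\bigl(\cH\otimes A\otimes a_Y^*\bar P\bigr)\cong\bigoplus_j\bigl(R^j a_{Y*}(\cH\otimes A\otimes a_Y^*\bar P)\bigr)[-j]$ then lets one extract $H^i(B,\cF\otimes\bar P)=0$ for $i>0$ as a direct summand of the preceding group. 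The main obstacle is exactly here: the ampleness one must exploit lives on $Y$, not on $B$, so vanishing on $B$ cannot be invoked directly, and the detour through $Y$ --- together with the degeneration of the Leray spectral sequence for $a_Y$ --- is essential; one must also carry the torsion twist $Q$ through the \'etale cover and handle the possibly positive-dimensional fibres of $a_Y$. Granting this last vanishing, $\cF$ is $\mathrm{IT}_0$ on $B$ and the proposition follows.
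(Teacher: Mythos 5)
First, a point of reference: the paper does not prove this proposition --- it is imported verbatim from \cite{jiang}, Lemma 2.2 --- so there is no in-paper proof to compare against; the comparison below is with the standard argument in that reference. Your architecture is broadly right (reduce via the projection formula to vanishing of higher cohomology of the pushforward twisted by elements of $\Pic^0$, so that $h^0=\chi$ is constant, with the positivity coming from $(m-1)K_X\sim_{\Q}f^*A+D$, $A$ ample on $Y$, $D\geq 0$), and your $X=C\times E$ example correctly shows the hypothesis is needed. But there is a genuine gap at exactly the step you flag as hardest, and it is not just a matter of citing the right vanishing theorem. The effective $\Q$-divisor $D=\frac{m-1}{N}E$ has no reason to have coefficients in $[0,1)$, even after a log resolution, so it is not a klt boundary, and neither Kawamata--Viehweg nor Koll\'ar-type vanishing applies to ``$\omega_X\otimes\cO_X(\text{boundary})\otimes f^*(\text{ample})$'' as you write it. The missing ingredient is the asymptotic multiplier ideal $\cJ:=\cJ(\|\omega_X^{m-1}\otimes Q\|)$: one proves vanishing for the subsheaf $f_*(\omega_X^m\otimes Q\otimes\cJ)$, to which relative Nadel/Koll\'ar vanishing does apply because the excess of $D$ has been absorbed into $\cJ$ and what remains is $K_X+f^*A+(\text{klt part})$, and one then invokes the nontrivial theorem (Lazarsfeld, \emph{Positivity} II, \S 11.2) that twisting by $\cJ$ does not change $H^0$ of $\omega_X^m\otimes Q\otimes f^*P$. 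Without this, your argument either produces no vanishing (coefficients $\geq 1$) or computes the $h^0$ of the wrong sheaf; note also that the full pushforward $\cF=g_*(\omega_X^m\otimes Q)$ that you propose to prove $\mathrm{IT}_0$ need not be $\mathrm{IT}_0$ at all --- only its multiplier-ideal correction is, which is enough because the two sheaves have the same $h^0$ after every twist.

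Two structural remarks. The detour through $B=\Alb(Y)$ is unnecessary and is what creates the difficulty you worry about: since $P\in\Pic^0(Y)$ is algebraically trivial, $\chi(Y,\cG\otimes P)$ is independent of $P$ for \emph{any} coherent sheaf $\cG$ on $Y$, so it suffices to prove $H^i(Y,f_*(\omega_X^m\otimes Q\otimes\cJ)\otimes P)=0$ for $i>0$ directly on $Y$, which is where the ampleness of $A$ lives. And the ``Koll\'ar decomposition for $a_Y$'' cannot be applied to an arbitrary coherent sheaf $\cH$ on $Y$: the splitting of $Rh_*$ is a statement about (twisted) dualizing sheaves of a smooth source, so if you insist on working on $B$ you must set it up for the composite $X\to B$ (or a log resolution thereof), not for $a_{Y*}$ of $\cH$.
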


We deduce a corollary in the case where $Y$ is a curve, which we will use several times.

\begin{coro}\label{J3}Let $f: X\rightarrow C$ be a surjective morphism
between a smooth projective variety $X$ and a smooth projective curve $C$ of genus $\geq 1$
 and assume  that the Iitaka model of $X$ dominates $C$. If for some torsion
element $Q\in \Pic^0(X)$ and some integer $m\geq 2$, we have $h^0(X,
\omega_X^m\otimes Q)\neq 0$, then $f_*(\omega_X^m\otimes Q)$ is an
ample vector bundle on $C$.
\end{coro}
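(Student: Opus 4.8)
The plan is to deduce this from Proposition \ref{J2} together with a standard positivity criterion for pushforwards under fibrations over curves. First I would observe that, since $C$ has genus $\geq 1$, the sheaf $\cF := f_*(\omega_X^m\otimes Q)$ is nonzero by the hypothesis $h^0(X,\omega_X^m\otimes Q)\neq 0$ (being nonzero on the total space means the pushforward is a nonzero sheaf on $C$), and it is locally free of some rank $r\geq 1$ after passing to the generic point; since $C$ is a smooth curve, $\cF$ is automatically a vector bundle (torsion-free sheaves on smooth curves are locally free). So the content is really the ampleness.

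Next, the key input is that $h^0(C,\cF\otimes P) = h^0(X,\omega_X^m\otimes Q\otimes f^*P)$ is \emph{constant} for all $P\in\Pic^0(C)$, by Proposition \ref{J2} (which applies precisely because we assume the Iitaka model of $X$ dominates $C$, and $Q$ is torsion, $m\geq 2$). In particular $h^0(C,\cF\otimes P) = h^0(C,\cF) = r \cdot (\text{something}) >0$ is the same positive number for \emph{every} $P\in\Pic^0(C)$. The plan is to exploit this: a vector bundle $\cF$ on a curve $C$ of genus $g\geq 1$ all of whose twists by degree-$0$ line bundles have $h^0$ equal to a fixed positive constant must be ample. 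Indeed, by the theory of the Harder--Narasimhan filtration and the Fourier--Mukai/generic vanishing philosophy, this constancy of $h^0$ forces every Harder--Narasimhan quotient of $\cF$ to have strictly positive slope: if some quotient had slope $\leq 0$, then a general degree-$0$ twist of that quotient (equivalently, a general twist that destabilizes appropriately) would have no sections, contradicting the constancy and positivity of $h^0(C,\cF\otimes P)$ across all $P$. Since a vector bundle on a curve with all HN-slopes positive is ample (Hartshorne), we conclude $\cF$ is ample.

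Concretely, I would argue as follows. Suppose $\cF$ is not ample. Then its HN filtration has a quotient bundle $\cQ$ of slope $\mu(\cQ)\leq 0$. If $\mu(\cQ)<0$, then for a general $P\in\Pic^0(C)$ the twist $\cQ\otimes P$ is a semistable bundle of negative degree, hence $h^0(C,\cQ\otimes P)=0$; if $\mu(\cQ)=0$, then $\cQ\otimes P$ is semistable of degree $0$, and for $P$ outside a proper closed (in fact countable union, but one suffices) subset of $\Pic^0(C)$ one still has $h^0(C,\cQ\otimes P)=0$ (a general semistable bundle of degree $0$ and rank $\geq 1$ on a curve of genus $\geq 1$ has no sections — this uses $g\geq 1$ crucially). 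Either way, $h^0(C,\cF\otimes P) < h^0(C,\cF\otimes P_0)$ for a suitable comparison (taking $P_0=\cO_C$ and noting $\cQ$ is a \emph{sub} of the dual, or rather working with the first nonzero piece of the dual HN filtration to get the bound on sections) — more cleanly, one shows $h^0(C,\cF\otimes P)$ drops for general $P$, contradicting Proposition \ref{J2}. I expect the \textbf{main obstacle} to be phrasing this last step cleanly: one must be careful that constancy of $h^0$ rules out \emph{all} non-positive HN-slopes, not just negative ones, and the degree-$0$ semistable case is where the genus hypothesis $g\geq 1$ enters (on $\P^1$ every bundle of degree $0$ is a sum of $\cO$'s and has plenty of sections, so the statement would be false there). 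Handling the degree-$0$ quotient via "general semistable degree-$0$ bundles on a positive-genus curve are nontrivial extensions with no sections" — or equivalently via the surjectivity of the determinant/theta-divisor argument — is the delicate point; everything else is a direct invocation of Proposition \ref{J2} and Hartshorne's ampleness criterion on curves.
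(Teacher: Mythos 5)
Your reduction to Proposition \ref{J2} is the right starting point, and the constancy of $h^0(C,\cF\otimes P)$ for $P\in\Pic^0(C)$ is indeed the engine of the paper's argument as well. But the positivity criterion you build on it is false as stated: a vector bundle on a curve of genus $\geq 1$ whose twists by all degree-$0$ line bundles have constant positive $h^0$ need not be ample. Take $\cF=L\oplus \cO_C(-p)$ with $\deg(L)\geq 2g(C)$; then $h^0(C,\cF\otimes P)=h^0(C,L\otimes P)=\deg(L)+1-g(C)>0$ for every $P\in\Pic^0(C)$, yet $\cF$ has a quotient of negative degree and is not ample (nor even nef). This pinpoints the flaw in your Harder--Narasimhan step: the vanishing of $h^0(C,\cQ\otimes P)$ for a non-positive-slope quotient $\cQ$ and general $P$ does \emph{not} force $h^0(C,\cF\otimes P)$ to drop, because all the sections may come from the maximal destabilizing part; the "delicate point" you flagged is in fact an unbridgeable gap if constancy of $h^0$ is your only input.

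The missing ingredient is an a priori positivity statement for $\cF=f_*(\omega_X^m\otimes Q)$ itself, which is where the paper starts: since $Q$ is torsion, $\omega_X^m\otimes Q$ is a direct summand of $\pi_*\omega_{X'}^m$ for an \'etale cover $\pi:X'\to X$, and Viehweg's theorem (\cite{V}, Corollary 3.6) then shows that $f_*(\omega_{X/C}^m\otimes Q)$ is nef. This immediately settles $g(C)\geq 2$ (twist by the ample $\omega_C^m$). For $g(C)=1$ the paper proves, by induction on the rank via the Harder--Narasimhan filtration, that $V_1(F)$ is finite for any nef bundle $F$ on an elliptic curve; combining this with the constancy of $h^1(C,\cF\otimes P)$ (which follows from Proposition \ref{J2} and Riemann--Roch, the step you also identified) forces $h^1(C,\cF\otimes P)=0$ for all $P$, so $\cF$ is I.T. of index $0$ and hence ample by \cite{D}. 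Your outline uses the Harder--Narasimhan filtration in roughly the right way, but only the nefness input makes that analysis conclusive: it guarantees all slopes are $\geq 0$, and the borderline slope-$0$ pieces are then excluded by the finiteness of $V_1$ rather than by the constancy of $h^0$ alone.
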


\begin{proof}Since $C$ is a smooth curve, the torsion-free sheaf $f_*(\omega_X^m\otimes Q)$ is locally free.
Since $Q$ is   torsion, there exists an \'{e}tale cover $\pi: X^{'}\rightarrow X$ such that $\omega_X^m\otimes Q$ is a direct summand of $\pi_*\omega_{X^{'}}^m$. By  \cite{V}, Corollary 3.6, the vector bundle $(f\circ\pi)_*\omega_{X^{'}/C}^m$ is nef, hence so is $f_*(\omega_{X/C}^m\otimes
Q)$.  

If $g(C)\geq 2$, $\omega_C$ is ample, hence so is $f_*(\omega_X^m\otimes Q)$.

If $g(C)=1$, we   claim the following standard fact for which we could not find a reference:
\begin{itemize}
\item[$\clubsuit$]for any nef vector bundle $F$ on an elliptic curve $C$, the cohomological   locus
$V_1(F)$  is finite.
\end{itemize}
We prove the claim by induction on the rank of $F$. The rank-$1$
case is trivial. Let $r>0$ be an integer. Assuming   $\clubsuit$
  proved for all nef vector bundles of rank $\leq r$, we
will prove $\clubsuit$ for any nef vector bundle $F$ of rank $r+1$.

 We consider the
Harder-Narasimhan filtration   (\cite{Laz}, Proposition 6.4.7)
\begin{eqnarray*}
0=F_n\subset F_{n-1}\subset\cdots\subset F_1\subset F_0=F,
\end{eqnarray*}
where $F_i$ are subbundles of $F$ with the properties that
$F_i/F_{i+1}$ is a semistable bundle for each $i$ and
\begin{eqnarray*}
\mu(F_{n-1}/F_n)>\cdots >\mu(F_1/F_2)>\mu(F_0/F_1).
\end{eqnarray*}
Since $F=F_0$ is nef, so is $F_0/F_1$, hence $\mu(F_0/F_1)\geq 0$.
So $F_i/F_{i+1}$ is a semistable vector bundle with positive slope,
for each $i\geq 1$. Hence, for each $i\geq 1$, $F_i/F_{i+1}$ is an
ample vector bundle (see Main Claim in the proof of \cite{Laz}, Theorem 6.4.15). Thus $F_1$ is also ample, and $V_1(F_1)$ is empty.
We just need to prove that $V_1(F_0/F_1)$ is finite.

If $\mu(F_0/F_1)>0$, we again have that $F_0/F_1$ is ample and
$V_1(F)$ is empty; so we are done. If $\mu(F_0/F_1)=0$, take $P\in
V_1(F_0/F_1)$. Then $h^1(C, F_0/F_1\otimes P)\neq 0$. Hence, by
Serre duality,
 there exists a non-trivial homomorphism of bundles $$\pi_P: F_0/F_1\rightarrow P^{\vee}.$$
 Since $F_0/F_1$ is semistable and $\mu(F_0/F_1)=0$, $\pi_P$ is surjective. We have an exact sequence of
 vector bundles: $$0\rightarrow G\rightarrow F_0/F_1\rightarrow P^{\vee}\rightarrow 0.$$
 The rank of $G$ is $\leq r$. Since $F_0/F_1$ is semistable and $\mu(G)=\mu(F_0/F_1)=0$, $G$ is also semistable.
Hence $G$ is a nef vector bundle (by the Main Claim quoted above) of rank $\leq r$ and, by induction, $V_1(G)$ is  finite. We conclude that $V_1(F)=V_1(G)\cup \{P\}$ is finite. We have finished the proof of the Claim.
\medskip

Let the line bundle $Q$ and the integer $m$ be as in the assumptions.
By Proposition \ref{J2}, for $m\geq 2$, $h^0(X, \omega_X^m\otimes Q\otimes
f^*P)=h^0(C, f_*(\omega_X^m\otimes Q)\otimes P)$ is constant for all
$P\in\Pic^0(C)$, hence $h^1(C, f_*(\omega_X^m\otimes Q)\otimes P)$
is also constant for all $P\in\Pic^0(C)$. By the claim $\clubsuit$,
$h^1(C, f_*(\omega_X^m\otimes Q)\otimes P)=0$ for all $P\in
\Pic^0(C)$. Hence $f_*(\omega_X^m\otimes Q)$ is an
I.T. vector bundle of index 0, hence is ample (\cite{D}, Corollary 3.2).
\end{proof}

\subsection{Iitaka fibration of a variety  of maximal Albanese dimension}\label{s14}

We assume in this section that $X$ has maximal Albanese dimension and we consider
a model $f: X\rightarrow Y$  of the Iitaka fibration of $X$, where
 $ Y$ is a smooth projective
variety. We
have the commutative diagram:
\begin{eqnarray}\label{d14}
\xymatrix{ X\ar[d]^f\ar[r]^(.4){a_X}&\Alb(X)\ar[d]^{f_*}\\
Y\ar[r]^(.4){a_Y}&\Alb(Y).}\end{eqnarray} By Proposition 2.1 of
\cite{HAC1}, $a_Y$ is generically finite, $f_*$ is an algebraic
fiber space, $\Ker ( f_*)$ is an abelian variety denoted by $K$, and a
general fiber of $f$ is birational to an abelian variety
$\widetilde{K}$ isogenous to $K$. Let $G$   be the kernel of the group morphism
$$\Pic^0(X)=\Pic^0(\Alb(X))\rightarrow\Pic^0(K)\rightarrow
\Pic^0(\tilde{K}).$$ Then $f^*\Pic^0(Y)$ is contained in $G$, and
$\overline{G}=G/f^*\Pic^0(Y)$  is a finite group
consisting of elements $\chi_1, \ldots, \chi_r$. Let $P_{\chi_1},
\ldots, P_{\chi_r}\in G$ be torsion line bundles representing lifts
of the elements of $\overline{G}$, so that
$$G=\bigsqcup_{i=1}^r(P_{\chi_i}+f^*\Pic^0(Y)).$$
There is an easy observation:

\begin{lemm}\label{5.11}
Under the above assumptions and notation, let moreover $P\in \Pic^0(X)$. If $H^0(X, \omega_X^m\otimes P)\neq 0$ for
some $m>0$, we have $P\in G$.
\end{lemm}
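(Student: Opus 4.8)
The plan is to restrict a nonzero section of $\omega_X^m\otimes P$ to a general fiber of $f$ and use the fact, recalled above, that such a fiber is birational to the abelian variety $\widetilde{K}$.

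First I would take $0\neq s\in H^0(X,\omega_X^m\otimes P)$ and let $F$ be a general fiber of $f$, which is smooth. Since $f$ is dominant, $f^{-1}(U)$ is dense in $X$ for $U\subset Y$ dense open, so $s$ cannot vanish identically on every fiber over $U$; hence we may assume $s|_F\neq 0$. As $F=f^{-1}(y)$ is a smooth fiber over a smooth point, the conormal bundle $I_F/I_F^2=f^*(\gm_y/\gm_y^2)\otimes\cO_F$ is trivial, so adjunction gives $\omega_X|_F\isom\omega_F$; thus $s|_F$ is a nonzero section of $H^0(F,\omega_F^m\otimes P|_F)$.

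Next I would transport this to $\widetilde{K}$. By the description above, $F$ is birational to the abelian variety $\widetilde{K}$, compatibly with the maps of $F$ and of $\widetilde{K}$ to $\Alb(X)$. Choosing a common smooth resolution $W$ with birational morphisms $g\colon W\to F$ and $h\colon W\to\widetilde{K}$, and the common composite $\pi\colon W\to\Alb(X)$, the standard comparison $\omega_W=g^*\omega_F\otimes\cO_W(E)$ with $E$ effective and $g$-exceptional (and similarly for $h$, where moreover $\omega_{\widetilde{K}}$ is trivial) yields
\[ H^0(F,\omega_F^m\otimes P|_F)\isom H^0(W,\omega_W^m\otimes\pi^*P)\isom H^0(\widetilde{K},P|_{\widetilde{K}}). \]
Since the left-hand side is nonzero, $H^0(\widetilde{K},P|_{\widetilde{K}})\neq 0$, which on an abelian variety forces $P|_{\widetilde{K}}$ to be trivial in $\Pic^0(\widetilde{K})$. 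By the very definition of $G$ as the kernel of $\Pic^0(X)\to\Pic^0(K)\to\Pic^0(\widetilde{K})$, this means precisely $P\in G$.

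The only delicate points are bookkeeping: checking that the birational identification $F\dra\widetilde{K}$ can be chosen compatibly with the Albanese maps, so that $g^*(P|_F)$ and $h^*(P|_{\widetilde{K}})$ agree on $W$ as $\pi^*P$; and invoking the routine but essential birational invariance of twisted plurigenera $h^0(\,\cdot\,,\omega^m\otimes(\text{degree }0))$ via the effective exceptional divisor argument. I expect the compatibility of the birational map with the maps to $\Alb(X)$ to be the one step needing care; the rest is immediate.
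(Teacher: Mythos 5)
Your proof is correct and follows essentially the same route as the paper's: the paper argues by contraposition that if $P\notin G$ then $P|_{\widetilde{K}}$ is non-trivial, so every section of $\omega_X^m\otimes P$ vanishes on a general fiber and hence vanishes identically, which is exactly your restriction-to-a-general-fiber argument run in the other direction. The extra details you supply (adjunction on the smooth fiber, the common resolution, and the birational invariance of the twisted plurigenus) are the standard facts the paper leaves implicit.
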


\begin{proof}If $P\notin G$, since a general fiber $F$ of $f$ is
birational to the abelian variety $\widetilde{K}$ and
$P|_{\widetilde{K}}$ is non-trivial, any section of $\omega_X^m\otimes P$ vanishes on $F$. Hence $H^0(X, \omega_X^m\otimes
P)=0$, which is a contradiction.
\end{proof}

Chen and Hacon made
several useful observations about the cohomological locus $V_0(\omega_X)$ (\cite{CH2}, Lemma
2.2)  which we summarize in the following proposition.

\begin{prop}[Chen-Hacon]\label{ch1}Under the above assumptions and notation, we have the following.
\begin{itemize}
\item[\rm (1)] $V_0(\omega_X)\subset G.$
\item[\rm (2)] Denote by $V_0(\omega_X, \chi_i)$ the union of
irreducible components of $V_0(\omega_X)$ contained in
$P_{\chi_i}+f^*\Pic^0(Y)$. Then for each $i$, $V_0(\omega_X,
\chi_i)$ is not empty.
\item[\rm (3)] If $P_{\chi_i}\notin f^*\Pic^0(Y)$, the dimension of $V_0(\omega_X, \chi_i)$ is positive.
\end{itemize}
\end{prop}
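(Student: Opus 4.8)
Throughout I use the notation of Subsection~\ref{s14}. Since $a_Y$ is generically finite and $f_*$ is an algebraic fiber space with kernel $K$, we have $\Alb(X)/K\cong\Alb(Y)$, so $f^*\Pic^0(Y)$ is exactly the identity component of $G$, namely $\Ker\!\big(\Pic^0(X)\to\Pic^0(K)\big)$. In particular $\overline G$ is finite and, for $\chi_i\neq 0$, the restriction $P_{\chi_i}|_K$ is a nonzero torsion element of $\Pic^0(K)$; equivalently, the isogeny $\tilde K\to K$ is not an isomorphism. Part~(1) is then immediate: it is Lemma~\ref{5.11} with $m=1$, since $H^0(X,\omega_X\otimes P)\neq 0$ forces $P\in G$.

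For part~(2), fix $i$ and set $\cF_i:=f_*(\omega_X\otimes P_{\chi_i})$, a torsion-free sheaf on $Y$. Taking $f$ with smooth general fiber $F$, adjunction gives $(\omega_X\otimes P_{\chi_i})|_F\cong\omega_F\otimes P_{\chi_i}|_F$; since $F$ is birational to the abelian variety $\tilde K$ while $P_{\chi_i}\in G$ restricts trivially there, $h^0\!\big(F,(\omega_X\otimes P_{\chi_i})|_F\big)=h^0(\tilde K,\omega_{\tilde K})=1$, so $\cF_i$ is nonzero, of generic rank one. Now $a_{Y*}\cF_i=(a_Y\circ f)_*(\omega_X\otimes P_{\chi_i})$, and because $P_{\chi_i}$ is torsion there is an \'etale cover $\pi\colon X'\to X$ with $\omega_X\otimes P_{\chi_i}$ a direct summand of $\pi_*\omega_{X'}$, exactly as in the proof of Corollary~\ref{J3}. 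Then $(a_Y\circ f\circ\pi)_*\omega_{X'}$ is a $GV$-sheaf on $\Alb(Y)$ by generic vanishing (it is the pushforward of the $GV$-sheaf $a_{X'*}\omega_{X'}$ along the surjective homomorphism $\Alb(X')\to\Alb(Y)$), and the $GV$-property passes to direct summands, so $a_{Y*}\cF_i$ is a nonzero $GV$-sheaf on $\Alb(Y)$. A nonzero $GV$-sheaf has nonempty $V_0$, so some $P\in\Pic^0(Y)$ satisfies $H^0(Y,\cF_i\otimes P)\neq0$; by the projection formula $H^0(X,\omega_X\otimes P_{\chi_i}\otimes f^*P)\neq0$, so $P_{\chi_i}\otimes f^*P\in V_0(\omega_X,\chi_i)$.

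For part~(3), assume $\chi_i\neq0$ and suppose, for contradiction, that some irreducible component of $V_0(\omega_X,\chi_i)$ is a point. By the structure theorem of Green--Lazarsfeld and Simpson this component is a single torsion point $P$; by part~(1) $P\in G$, and by assumption $P\in P_{\chi_i}+f^*\Pic^0(Y)$ with $P\notin f^*\Pic^0(Y)$, so $P|_K\neq0$. First I would reduce to $\chi(\omega_X)=0$: if $\chi(\omega_X)>0$ then $a_{X*}\omega_X$, being a $GV$-sheaf of positive Euler characteristic on $\Alb(X)$, has $V_0(a_{X*}\omega_X)=\Pic^0(X)$, whence $V_0(\omega_X)=\Pic^0(X)$, which is irreducible of dimension $q(X)=\dim X\geq1$ and so has no zero-dimensional component. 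With $\chi(\omega_X)=0$ in force, the projection formula gives $V_0(\omega_X,\chi_i)=P_{\chi_i}+f^*V_0(a_{Y*}\cF_i)$ (with $f^*$ a closed immersion on $\Pic^0$), so the hypothesis says $V_0(a_{Y*}\cF_i)$ has an isolated point, and this is what must be contradicted. The mechanism I would use is positivity of the twisted direct image: writing $\cF_i=\omega_Y\otimes f_*(\omega_{X/Y}\otimes P_{\chi_i})$, the sheaf $f_*(\omega_{X/Y}\otimes P_{\chi_i})$ is weakly positive by Viehweg's theorem (through the \'etale cover used in part~(2)), and since $P_{\chi_i}\notin f^*\Pic^0(Y)$, i.e.\ the isogeny $\tilde K\to K$ is nontrivial, the twist by this nontrivial torsion character should absorb a strictly positive share of the fiberwise positivity recorded by the canonical bundle formula, making $a_{Y*}\cF_i$ positive enough --- positive generic Euler characteristic, or a positive-dimensional component of $V_0$ --- to contradict the isolatedness of $P$.

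The main obstacle is precisely this last, quantitative step: proving rigorously, via a Fujino--Mori type canonical bundle formula together with weak positivity, that a nontrivial torsion twist $P_{\chi_i}$ extracts a strictly positive amount of relative positivity from $f$, so that $V_0(\omega_X,\chi_i)$ cannot collapse to points once $P_{\chi_i}\notin f^*\Pic^0(Y)$. An alternative I would try is to pass to the \'etale cover $X_1\to X$ trivializing $\langle P_{\chi_i}\rangle$, use Proposition~\ref{J2} to identify the Iitaka fibration of $X_1$ with the Stein factorization of $X_1\to Y$ (whose base is \'etale of degree $\operatorname{ord}(\chi_i)$ over $Y$, because $P_{\chi_i}$ is trivial on $\tilde K$), transport the situation to the trivial coset on $X_1$, and contradict it there; but the genuinely new input --- the strict positivity contributed by a nontrivial torsion twist --- is the same in either approach.
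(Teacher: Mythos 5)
The paper does not actually prove this proposition: it is quoted from \cite{CH2}, Lemma 2.2, so there is no internal argument to compare your attempt with. Judged on its own terms, your part (1) is correct (it is indeed just Lemma~\ref{5.11} with $m=1$), and your part (2) is a sound generic-vanishing argument: $f_*(\omega_X\otimes P_{\chi_i})$ has generic rank one because $P_{\chi_i}$ restricts trivially to the general fiber, which is birational to $\widetilde K$; its pushforward to $\Alb(Y)$ is a nonzero $GV$-sheaf by Hacon's theorem (\cite{HAC3}) applied directly to the morphism $X'\to\Alb(Y)$ --- that is the correct justification, rather than your parenthetical claim that pushing a $GV$-sheaf forward along a surjection of abelian varieties preserves the $GV$ property, which is not a formal consequence because of the higher direct images in the Leray spectral sequence; and a nonzero $GV$-sheaf has nonempty $V_0$. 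Since by (1) each irreducible component of $V_0(\omega_X)$ is connected and hence lies in a single coset $P_{\chi_i}+f^*\Pic^0(Y)$, this does give (2).

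Part (3), however, is not proved, and you say so yourself. Everything after your reduction to $\chi(\omega_X)=0$ and the identification $V_0(\omega_X,\chi_i)=P_{\chi_i}+f^*V_0(a_{Y*}\cF_i)$ --- namely that the nontrivial torsion twist ``should absorb a strictly positive share of the fiberwise positivity'' and make $a_{Y*}\cF_i$ ``positive enough'' --- is a restatement of what must be shown, not an argument: no canonical bundle formula or weak positivity statement is invoked in a form that actually contradicts the finiteness of $V_0(a_{Y*}\cF_i)$. There is also a small logical slip: the negation of ``$\dim V_0(\omega_X,\chi_i)>0$'' is that \emph{every} component is a point, not that \emph{some} component is, so you are entitled to assume $V_0(a_{Y*}\cF_i)$ finite, which you should exploit. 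Since item (3) is precisely the part this paper leans on (it drives Claims 1 and 2 in the proof of Theorem~\ref{4.11} and the existence of the positive-dimensional $T_{-\chi_{i,t}}$ in Proposition~\ref{5.5}), the honest conclusion is that your proposal establishes (1) and (2) but leaves (3) open; for a complete argument one must follow Chen and Hacon's proof in \cite{CH2}.
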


Since every component of $V_0(\omega_X)$ is a
translate by a torsion point of an abelian subvariety of $\Pic^0(X)$ (\cite{GL1}, \cite{GL2}, \cite{S}), we can write by item (1):
$$V_0(\omega_X)=\bigcup_{1\leq i\leq r}\bigcup_s(P_{\chi_{i,s}}+T_{\chi_{i,s}})\subset
G,$$ where $P_{\chi_{i,s}}\in P_{\chi_i}+f^*\Pic^0(Y)$ is a torsion
point and $T_{\chi_{i,s}}$ is an abelian subvariety of
$f^*\Pic^0(Y)$.

\begin{defi}\label{5de}\upshape
We call $T_{\chi_{i,s}}$ a {\em maximal
component} of $V_0(\omega_X)$  if $T_{\chi_{i,s}}$ is maximal for the
inclusion among all $T_{\chi_{j,t}}$.
\end{defi}

 By \cite{CH2}, Theorem 2.3, note that
necessarily, if $\kappa(X)>0$, we have $\dim (T_{\chi_{i,s}})\geq 1$.

We conclude this section with a technical result on the structure of the locus $V_0(\omega_X)$ when moreover  $q(X)=\dim (X)$ and $\kappa(X)>0$.

\begin{prop}\label{5.4}Let $X$ be a
smooth projective variety with maximal Albanese dimension, such that $q(X)=\dim (X)$  and
$\kappa(X)>0$. Let $T_{\chi_{i,s}}$ be a maximal component of
$V_0(\omega_X)$. Then, for any $(j,t)$ such that $\dim
(T_{\chi_{j,t}})\geq 1$, we have $\dim(T_{\chi_{i,s}}\cap
T_{\chi_{j,t}})\geq 1$.
\end{prop}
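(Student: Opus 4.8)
The plan is to argue by contradiction, combining the generic vanishing machinery with a dimension count on the direct image $f_*\omega_X^2$ (or rather its twists). Suppose $T := T_{\chi_{i,s}}$ is a maximal component of $V_0(\omega_X)$ but there is some $(j,t)$ with $\dim(T_{\chi_{j,t}}) \geq 1$ and $T \cap T_{\chi_{j,t}}$ finite. Write $T' := T_{\chi_{j,t}}$. The idea is that $T$ and $T'$ give rise to two "independent" directions of positivity inside $f^*\Pic^0(Y)$, and this forces $\dim Y = \kappa(X)$ to be large relative to the contributions these loci make to $h^0$, in a way that contradicts the constancy statement of Proposition \ref{J2} together with the structure of $V_0(\omega_X)$.

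Concretely, I would first reduce to working on $Y$: since $V_0(\omega_X,\chi_i)$ is nonempty (Proposition \ref{ch1}(2)) and its components are translates of subtori of $f^*\Pic^0(Y)$, one can transport everything via $f^*$ to $\Pic^0(Y)$, where $a_Y$ is generically finite so $Y$ has maximal Albanese dimension and $\kappa(Y) = \dim Y$. The key input is that $T$ being \emph{maximal} means the associated fibration — say $Y \to Z$ with $\dim Z = \dim Y - \dim T$ obtained by quotienting $\Alb(Y)$ by the abelian subvariety dual to $T$ — has the property that $\omega_X^2$ (twisted by the relevant torsion line bundle $P_{\chi_{i,s}}$) has sections pulled back from, or at least positive along, this quotient. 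Then the presence of a second positive-dimensional component $T'$ meeting $T$ only in a finite set would let me build a surjection $\Pic^0(Y) \to A_1 \times A_2$ onto an abelian variety of dimension $\dim T + \dim T' - 0$, i.e.\ the images of $T$ and $T'$ span; pulling back sections along both factors and using that each contributes at least a line bundle's worth of sections, I would get $h^0$ jumping, contradicting Proposition \ref{J2}.

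More precisely, the mechanism I expect to use is: by Proposition \ref{J2} (applicable since the Iitaka model of $X$ dominates $Y$), $h^0(X, \omega_X^m \otimes Q \otimes f^*P)$ is constant in $P \in \Pic^0(Y)$ for any torsion $Q$. If $V_0(\omega_X)$ contained two positive-dimensional components $T + P_{\chi_{i,s}}$ and $T' + P_{\chi_{j,t}}$ inside the same coset $P_{\chi}+f^*\Pic^0(Y)$ — which, after possibly replacing the ambient $m=2$ sheaf by the relevant summand after an étale cover à la Corollary \ref{J3}, we may assume, taking $Q = P_{\chi_i}$ — with $T \cap T' $ finite, then the locus $V_0$ restricted to that coset would have two components through "transverse" directions, and one shows this is incompatible with $h^0$ being constant: roughly, near a general point of $T$ one gets $\dim T$ vanishing directions for $H^1$ (hence non-vanishing for $H^0$ by Euler characteristic considerations on an appropriate quotient), but the extra component $T'$ forces an additional jump. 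The cleanest route is probably to pass to the Iitaka fibration $g: Y \to W$ associated to the maximal subtorus $T$ — here $W$ has maximal Albanese dimension with $\Pic^0(W) \supset T$ — apply the Chen–Hacon decomposition on $W$, and observe that a component $T'$ not contained in $T$ would descend to a positive-dimensional component of $V_0(\omega_W)$ of dimension $\dim(T' / (T\cap T')) = \dim T' \geq 1$ mapping to a \emph{proper} subtorus, contradicting that $W$ was chosen so that $T$ is "the whole of" $\Pic^0$ in the relevant sense — more carefully, contradicting maximality of $T$ after the descent.

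The main obstacle, I expect, is making the "transverse components force a jump in $h^0$" step rigorous: generic vanishing gives control over $V_i(\omega_X)$ but translating a statement about several components of $V_0$ into an honest inequality on $h^0(\omega_X^2 \otimes Q \otimes f^*P)$ for varying $P$ requires either a clever application of the theory of GV-sheaves and their Fourier–Mukai transforms, or an explicit base-change/semicontinuity argument on the fibration $f$ together with the ampleness of $f_*(\omega_X^2 \otimes Q)$-type bundles over subvarieties (Corollary \ref{J3} in the curve case suggests the pattern). I anticipate that the hypothesis $q(X) = \dim X$ enters precisely here, guaranteeing that $a_Y$ is generically finite so that components of $V_0(\omega_X)$ are genuinely subtori of $f^*\Pic^0(Y) \cong \Pic^0(Y)$ with no "extra" directions, which is what pins down the dimension bookkeeping; without it the transversality could be absorbed harmlessly.
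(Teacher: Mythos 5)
There is a genuine gap, and in fact the mechanism you propose is aimed at the wrong target. Proposition \ref{5.4} carries no numerical hypothesis: no bound on $P_2(X)$ or $P_m(X)$ is in force, so an argument that tries to ``force $h^0(X,\omega_X^2\otimes Q\otimes f^*P)$ to jump, contradicting Proposition \ref{J2}'' cannot close. The constancy in Proposition \ref{J2} is never threatened by $V_0(\omega_X)$ having several transverse positive-dimensional components --- the constant value could simply be large. You are implicitly importing the hypotheses of the later applications, where $P_m(X)\le 2m-2$ is assumed; here it is not. The only thing available to contradict is the \emph{maximality} of $T_{\chi_{i,s}}$ itself, and that requires exhibiting a torsion-translated subtorus of $V_0(\omega_X)$ --- a statement about $\omega_X$, not $\omega_X^2$ --- strictly containing $T_{\chi_{i,s}}$. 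Your sketch never produces such a subtorus, and you yourself flag the key step (``transverse components force a jump'') as the main obstacle without resolving it; the alternative ``descend to $W$'' route is likewise asserted rather than argued, since it is not clear why a component $T_{\chi_{j,t}}$ should descend to a positive-dimensional component of $V_0(\omega_W)$.

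The missing idea is the following. Let $c=\codim (T_{\chi_{i,s}})$; by Ein--Lazarsfeld, $P_{\chi_{i,s}}+T_{\chi_{i,s}}$ is a component of $V_c(\omega_X)$. After an \'etale cover $t:\widetilde X\to X$ induced from $\Alb(X)$ and killing the torsion translates, the Stein factorization $g_1:\widetilde X\to X_1$ of $\widetilde X\to\widehat T_{\chi_{i,s}}$ satisfies $R^cg_{1*}\omega_{\widetilde X}=\omega_{X_1}$ (Koll\'ar), and Hacon's generic vanishing for the sheaves $R^kf_{1*}\omega_{\widetilde X}$ plus the Leray spectral sequence and semicontinuity give $h^0(X_1,\omega_{X_1}\otimes h_1^*P)>0$ for \emph{every} $P\in T_{\chi_{i,s}}$; similarly for $X_2$ and $T_{\chi_{j,t}}$. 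If $T_{\chi_{i,s}}\cap T_{\chi_{j,t}}$ were finite, the map $(g_1,g_2):\widetilde X\to X_1\times X_2$ would be surjective with $K_{\widetilde X/X_1\times X_2}$ effective, so multiplying pulled-back sections gives $h^0(\widetilde X,\omega_{\widetilde X}\otimes t^*(P\otimes Q))>0$ for all $(P,Q)\in T_{\chi_{i,s}}\times T_{\chi_{j,t}}$; pushing down along $t_*\cO_{\widetilde X}=\bigoplus_\alpha P_\alpha$ then places a torsion translate of $T_{\chi_{i,s}}+T_{\chi_{j,t}}$, which strictly contains $T_{\chi_{i,s}}$, inside $V_0(\omega_X)$ --- the desired contradiction with maximality. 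None of these steps appears in your proposal, and without them the argument does not go through.
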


\begin{proof}Let $\widehat{T}_{\chi_{i,s}}$ and
$\widehat{T}_{\chi_{j,t}}$ be the dual of $T_{\chi_{i,s}}$ and
$T_{\chi_{j,t}}$ respectively. Let $\pi_1$ and $\pi_2$ be the
natural morphisms of abelian varieties $\Alb(X)\rightarrow
\widehat{T}_{\chi_{i,s}}$ and $\Alb(X)\rightarrow
\widehat{T}_{\chi_{j,t}}$. Take an \'{e}tale cover $t:
\widetilde{X}\rightarrow X$ which is induced by an \'{e}tale cover
of $\Alb(X)$ such that $t^*P_{\chi_{i,s}}$ and $t^*P_{\chi_{j,t}}$
are trivial. Let $f_1$ and $f_2$ be the compositions of morphisms
$\pi_1\circ a_X\circ t$ and $\pi_2\circ a_X\circ t$, respectively.
We then take the Stein factorizations of $f_1$ and $f_2$:
\begin{eqnarray*}
\xymatrix{\widetilde{X}\ar[d]^{g_1}\ar[dr]^{f_1}\\
X_1\ar[r]^(.4){h_1}& \widehat{T}_{\chi_{i,s}}}
\xymatrix{&\widetilde{X}\ar[d]^{g_2}\ar[dr]^{f_2}\\
 \qquad\qquad& X_2\ar[r]^(.4){h_2}& \widehat{T}_{\chi_{j,t}}}
\end{eqnarray*}
After modifications, we can assume that $X_1$ and $X_2$ are smooth.
 We claim the following:
\begin{itemize}
\item $h^0(X_1,
\omega_{X_1}\otimes h_1^*P)>0$  for all $P\in T_{\chi_{i,s}}$, and
similarly $h^0(X_2, \omega_{X_2}\otimes h_2^*Q)>0$  for all $Q\in
T_{\chi_{j,t}}$.
\end{itemize}
The argument to prove the claim is due to Chen and Debarre. Let  $c$ be the codimension of $T_{\chi_{i,s}}$ in
$\Pic^0(X)$. By the proof of Theorem 3 of \cite{EL},
$P_{\chi_{i,s}}+T_{\chi_{i,s}}$ is an irreducible component of
$V_c(\omega_X)$. Hence
\begin{eqnarray*}
h^c(\widetilde{X}, \omega_{\widetilde{X}}\otimes t^*P)\geq h^c(X,
\omega_X\otimes P\otimes P_{\chi_{i, s}})>0
\end{eqnarray*}
for any $P\in T_{\chi_{i,s}}$.

Again by the proof of Theorem 3 in \cite{EL}, the dimension of a
general fiber of $g_1$ is also $c$. Since $g_1$ is an algebraic
fiber space, we have $R^cg_{1*}\omega_{\widetilde{X}}=\omega_{X_1}$
(\cite{K3}, Proposition 7.6), and
$$R^cf_{1*}\omega_{\widetilde{X}}=h_{1*}(R^cg_{1*}\omega_{\widetilde{X}})=h_{1*}\omega_{X_1} $$
(\cite{K4}, Theorem 3.4). Moreover, the sheaves
$R^kf_{1*}\omega_{\widetilde{X}}$, satisfy the generic vanishing
theorem (\cite{HAC3}, Corollary 4.2), hence
$V_j(R^kf_{1*}\omega_{\widetilde{X}})\neq T_{\chi_{i,s}}$ for any
$j>0$. Pick $P\in T_{\chi_{i,s}}-\bigcup_{j>0,
k}V_j(R^kf_{1*}\omega_{\widetilde{X}})$, so that
$$H^j(\widehat{T}_{\chi_i,s}, R^kf_{1*}\omega_{\widetilde{X}}\otimes P)=0$$ for all $j>0$ and all $k$.
By the Leray spectral sequence, we have
$$
0\neq h^c(\widetilde{X}, \omega_{\widetilde{X}}\otimes
f_1^*P) = h^0(\widehat{T}_{\chi_{i,s}},
R^cf_{1*}\omega_{\widetilde{X}}\otimes
P) = h^0(\widehat{T}_{\chi_{i,s}},
h_{1*}\omega_{\widetilde{X}}\otimes P).
$$
Hence we conclude the claim by semicontinuity.

If $\dim(T_{\chi_{i,s}}\cap T_{\chi_{j,t}})=0$, the morphism
$$\Alb(X)\xrightarrow{(\pi_1, \pi_2)}\widehat{T}_{\chi_{i,s}}\times \widehat{T}_{\chi_{j,t}}$$
is surjective. Now we consider the following diagram
\begin{eqnarray*}
\xymatrix{
\widetilde{X}\ar[r]^{t}\ar[d]^{g_1}&X\ar[r]^-{a_X}\ar[dr]&\Alb(X)\ar[d]^{\pi_1}\\
X_1\ar[rr]^-{h_1}&&\widehat{T}_{\chi_{i,s}}.}
\end{eqnarray*}
 From the proof of Theorem 3 in \cite{EL}, we know that the fibers of $g_1$ fill up the fibers of $\pi_1$.
Hence we have a surjective morphism
$\widetilde{X}\xrightarrow{(g_1,g_2)}X_1\times X_2$. Since $a_X\circ
t: \widetilde{X}\rightarrow \Alb(X)$ and $(h_{1}, h_{2}):
X_{1}\times X_{2}\rightarrow \widehat{T}_{\chi_{i,s}}\times
\widehat{T}_{\chi_{j,t}}$ are generically finite and surjective, by
the proof of Lemma 3.1 in \cite{CH1}, $K_{\widetilde{X}/X_{1}\times
X_{2}}$ is effective. Therefore, it follows from the claim that
\begin{eqnarray}\label{d16}h^0(\widetilde{X}, \omega_{\widetilde{X}}\otimes
t^*P\otimes t^*Q)>0\end{eqnarray} for all $P\in T_{\chi_{i,s}}$ and
$Q\in T_{\chi_{j,t}}$. Since $t: \widetilde{X}\rightarrow X$ is
birationally equivalent to an \'{e}tale cover of $X$ induced by an
\'{e}tale cover of $\Alb(X)$,
$t_*\cO_{\widetilde{X}}=\bigoplus_\alpha P_{\alpha}$, where
$P_\alpha$ is a torsion line bundle on $X$ for every $\alpha$. Let
$$T=T_{\chi_{i,s}}+T_{\chi_{j,t}}$$ be the abelian variety generated by $T_{\chi_{i,s}}$ and
$T_{\chi_{j,t}}$. Then (\ref{d15}) implies that there exists an $\alpha$
such that
$$P_{\alpha}+T \subset V_0(\omega_X).$$
Since $\dim (T_{\chi_{j,t}})\geq 1$ and $\dim(T_{\chi_{i,s}}\cap
T_{\chi_{j,t}})=0$, we obtain $T_{\chi_{i, s}}\subsetneq T$, contradicting the assumption that $T_{\chi_{i,s}}$ is a
maximal component of $V_0(\omega_X)$. This finishes the proof of the
proposition.
\end{proof}

\section{Varieties with  $q(X)=\dim (X)$ and $0<P_m(X)\leq 2m-2$}

In this section, we prove that the Iitaka model of a smooth projective variety $X$ with  $q(X)=\dim (X)$ and $0<P_m(X)\leq 2m-2$ for some $m\geq 2$ is birational to an abelian variety. We begin with a useful easy lemma (\cite{HAC1}).

\begin{lemm}\label{J4}Let $X$ be a smooth projective variety, let $L$ and $M$
be line bundles on $X$, and let $T\subset \Pic^0(X)$ be an
irreducible subvariety of dimension $t$. If for some positive
integers $a$ and $b$ and all $P\in T$, we have $h^0(X, L\otimes
P)\geq a$ and $h^0(X, M\otimes P^{-1})\geq b$, then $h^0(X, L\otimes
M)\geq a+b+t-1.$
\end{lemm}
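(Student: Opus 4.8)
The statement to prove is Lemma~\ref{J4}: given line bundles $L, M$ on a smooth projective variety $X$ and an irreducible subvariety $T \subset \Pic^0(X)$ of dimension $t$, with $h^0(X, L \otimes P) \geq a$ and $h^0(X, M \otimes P^{-1}) \geq b$ for all $P \in T$, conclude $h^0(X, L \otimes M) \geq a + b + t - 1$.

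My plan is to use the multiplication-of-sections map together with a dimension count on $T$. The key construction is the following: consider the incidence-type varieties inside $T \times \P H^0(X, L\otimes P)$-style bundles. More concretely, I would first pick a general point $P_0 \in T$ and work with the multiplication map $H^0(X, L \otimes P) \otimes H^0(X, M \otimes P^{-1}) \to H^0(X, L \otimes M)$, which is independent of $P$ in the target. The idea is that as $P$ varies over $T$, the images of these multiplication maps sweep out a large subspace of $H^0(X, L\otimes M)$.

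First I would set up the relative picture. Let $\mathcal{P}$ be a Poincar\'e bundle on $X \times \Pic^0(X)$; restricting to $X \times T$ and pushing forward to $T$, the function $P \mapsto h^0(X, L \otimes P)$ is upper semicontinuous, so on a dense open $U \subseteq T$ we have $h^0(X, L\otimes P) = a' \geq a$ constant, and similarly $h^0(X, M \otimes P^{-1}) = b' \geq b$ on a dense open. Shrinking, assume both are constant and that $p_*(L \boxtimes \mathcal{P}|_{X\times T})$ and its analogue for $M$ are locally free on $U$. Now fix a nonzero section; the cleanest approach is: choose $0 \neq s \in H^0(X, M\otimes P_0^{-1})$ for a fixed general $P_0$. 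For each $P \in T$ near $P_0$, and each $\sigma \in H^0(X, L \otimes P)$, the product $\sigma \cdot s' $ — where $s'$ is a section of $M \otimes P^{-1}$ — lies in $H^0(X, L\otimes M)$. The subtlety is that $s$ is tied to $P_0$; so instead I would fix a single section $t_0 \in H^0(X, M \otimes P_0^{-1})$, and for $P = P_0$ use all of $H^0(X, L\otimes P_0)$ (dimension $\geq a$), getting a subspace $W_0 \subseteq H^0(X, L\otimes M)$ of dimension $\geq a$ (after checking $t_0$ does not kill any section, which holds if $t_0$ has nonzero image, using that $L\otimes M$-sections coming from products vanish only if one factor does). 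Then I would vary: moving $P$ along a curve in $T$ through $P_0$ and taking products with a varying section of $M\otimes P^{-1}$ that specializes to $t_0$, one picks up, generically, one new dimension per dimension of $T$ and $b-1$ further dimensions from the remaining sections of $M \otimes P_0^{-1}$, totalling $a + (b-1) + t$.

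The cleanest rigorous route, which I expect the author uses, is via the theory of generic vanishing / the derivative complex, or more elementarily: consider the subvariety $Z \subseteq \P H^0(X, L\otimes M)$ (or rather a constructible subset) defined as the closure of $\{ [\sigma \tau] : P \in U,\ \sigma \in H^0(L\otimes P)\setminus 0,\ \tau \in H^0(M\otimes P^{-1})\setminus 0 \}$, and bound $\dim Z$ from below by fibering over $T$. Over a general $P$, the fiber is the image of $\P^{a'-1} \times \P^{b'-1}$ under a morphism (the Segre-type map followed by multiplication); this image has dimension $\geq a' + b' - 2$ if the multiplication map is injective on decomposables through that point — which fails only on a locus one controls, or one argues the generic fiber has dimension exactly $a' + b' - 2$ when multiplication is generically injective, and at worst the total space $Z$ has $\dim Z \geq (a'+b'-2) + t$ provided the fibers over distinct general $P$ are not all equal. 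That last non-constancy is the main obstacle: one must rule out that all products $\sigma\tau$ for all $P\in T$ land in a fixed $(a'+b'-2)$-dimensional subspace. I would handle this by a monodromy/irreducibility argument or by noting that if they did, then in particular for $P$ and $P^{-1}$-twisted partners the base locus structure would force $L$ (or $M$) to be a pullback trivializing $T$, contradicting that $T$ genuinely varies; alternatively, following Hacon, one reduces to the case $t = 1$ by induction (taking a general $1$-dimensional $T' \subset T$ through a point, applying the rank-one case, then bootstrapping), where the varying-section argument is most transparent: a section of $L\otimes M$ not in $W_0$ is produced by the derivative of the family of products at $P_0$ in the direction of $T$, and its non-vanishing reduces to the Poincar\'e bundle having nontrivial Kodaira--Spencer class along $T$, which is automatic since $T$ is a positive-dimensional subvariety of $\Pic^0(X)$. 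Then $\dim W \geq a + b + t - 1$ gives the claim. I expect the hardest point to be cleanly justifying that the "new" sections obtained by differentiating along $T$ are genuinely linearly independent from $W_0$, i.e. that no unexpected collapse occurs; the standard fix is the base-point-freeness reduction or passing to an \'etale cover to trivialize torsion, both already in the paper's toolkit.
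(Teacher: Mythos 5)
Your overall strategy --- sweep out a subset of $|L\otimes M|$ by the images of the multiplication maps as $P$ varies over $T$, and bound its dimension by fibering over $T$ --- is the right one, but the proposal stops exactly at the point where the proof actually lives. You correctly identify the obstacle (``one must rule out that all products $\sigma\tau$ for all $P\in T$ land in a fixed $(a'+b'-2)$-dimensional subspace'') and then offer only speculative remedies: a monodromy argument, a Kodaira--Spencer/derivative-of-the-family argument, an unspecified induction on $t$. None of these is carried out, and the derivative approach in particular would be considerably harder to make rigorous than the problem deserves. As written, the proposal is a plan with an acknowledged hole at its center, not a proof.

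The missing idea --- and the way this is done in Hacon--Pardini, to whom the paper attributes the lemma without reproving it --- is to work with divisors rather than sections. Consider the incidence variety $Z=\{(P,D_1,D_2): P\in T,\ D_1\in |L\otimes P|,\ D_2\in |M\otimes P^{-1}|\}$, whose every component has dimension at least $t+(a-1)+(b-1)$, and the addition map $Z\to |L\otimes M|$, $(P,D_1,D_2)\mapsto D_1+D_2$. This map has \emph{finite} fibers: a fixed effective divisor $D=\sum n_iC_i$ admits only finitely many decompositions $D=D_1+D_2$ into effective summands (each $D_1=\sum m_iC_i$ with $0\leq m_i\leq n_i$), and each such $D_1$ determines $P$ uniquely as $\cO_X(D_1)\otimes L^{-1}$. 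Hence the image has dimension at least $t+a+b-2$, so $\dim|L\otimes M|\geq t+a+b-2$, i.e.\ $h^0(X,L\otimes M)\geq a+b+t-1$. This finiteness is exactly the non-collapsing statement you were unable to establish at the level of sections; passing to the linear systems (i.e.\ modding out by scalars and recording the product as a sum of divisors) makes it a one-line combinatorial observation. I would rewrite the argument around this point and discard the semicontinuity setup, the choice of $t_0$, and the derivative discussion, none of which is needed.
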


Our next result  is a consequence of the proof of Proposition 3.6 and Proposition
3.7 in \cite{CH4}, although not explicitly stated there.

\begin{prop}\label{5.1}Let $X$ be a smooth projective variety with  $q(X)=\dim (X)$ and $0<P_m(X)\leq 2m-2$ for some $m\geq 2$.
Let $f: X\rightarrow Y$ be an algebraic fiber space onto a smooth
projective variety $Y$, which is birationally equivalent to the Iitaka
fibration of $X$. Then $Y$ is birational to an abelian variety.
\end{prop}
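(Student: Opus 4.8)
The plan is to show that $Y$ has the numerical invariants forcing it to be an abelian variety, namely $q(Y)=\dim(Y)$ and $P_2(Y)=1$, and then invoke the optimal version of Kawamata's theorem quoted in the introduction. Since $f$ is birationally the Iitaka fibration, $\dim(Y)=\kappa(X)$, and by construction $Y$ is of general type (its own Iitaka fibration is the identity), so it suffices to bound $q(Y)$ from below and $P_2(Y)$ from above. First I would note that by Theorem \ref{J1} the Albanese morphism $a_X$ is surjective, so $X$ has maximal Albanese dimension and $\dim(a_Y(Y))=\dim(Y)$ by diagram (\ref{d14}); in particular $Y$ has maximal Albanese dimension and $a_Y$ is generically finite onto its image. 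The essential point is therefore to prove $q(Y)=\dim(Y)$, i.e.\ that $a_Y$ is surjective onto $\Alb(Y)$, equivalently that the composition $f_*: \Alb(X)\to\Alb(Y)$ is surjective with $\dim\Alb(Y)=\dim(Y)$ — and then to bound $P_2(Y)$.

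For the bound on plurigenera I would argue as follows. Since $Y$ is of general type of maximal Albanese dimension, $P_2(Y)\geq 1$. Suppose $P_2(Y)\geq 2$. For any torsion $Q\in\Pic^0(X)$ with $h^0(X,\omega_X^m\otimes Q)\neq 0$, Proposition \ref{J2} (applicable since the Iitaka model of $X$ dominates $Y$, $f$ being the Iitaka fibration itself) gives that $h^0(X,\omega_X^m\otimes Q\otimes f^*P)=h^0(Y, f_*(\omega_X^m\otimes Q)\otimes P)$ is constant in $P\in\Pic^0(Y)$. Combining $f_*(\omega_X^m\otimes Q)\supseteq \omega_Y^m\otimes f_*(\omega_{X/Y}^m\otimes Q)$ with the positivity of $f_*\omega_{X/Y}^m$ (as in the proof of Corollary \ref{J3}, via \cite{V}, Corollary 3.6) and with $h^0(Y,\omega_Y^2)\geq 2$, I would use Lemma \ref{J4} in $V_0(\omega_Y)$-style estimates — pairing sections of $\omega_Y^m\otimes Q\otimes P$ against sections of $\omega_Y^{?}\otimes Q^{-1}\otimes P^{-1}$ built from $P_2(Y)\geq 2$ — to force $h^0(X,\omega_X^{2m}\otimes Q^2)$, hence $P_{2m}(X)$, to exceed $2\cdot 2m-2$ for suitable $m$, contradicting the hypothesis $0<P_{m'}(X)\leq 2m'-2$ (which propagates to all multiples of $m$ once one section exists, again by subadditivity). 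This is the mechanism underlying Propositions 3.6–3.7 of \cite{CH4}, and I would recast their computation in the present generality.

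For $q(Y)=\dim(Y)$: writing $K=\Ker(f_*)$ as in \S\ref{s14}, I need $K$ to have the "expected" dimension, equivalently $\dim\Alb(Y)=\dim(Y)$. Here I would exploit $q(X)=\dim(X)$: we have $\dim(X)=q(X)=\dim\Alb(X)=\dim K+\dim\Alb(Y)$, while $\dim(X)=\dim(\text{fiber of }f)+\dim(Y)=\dim\tilde K+\dim(Y)=\dim K+\dim(Y)$, so $\dim\Alb(Y)=\dim(Y)$ automatically — the point $q(X)=\dim(X)$ is exactly what makes $Y$ of maximal Albanese dimension \emph{with} $a_Y$ generically finite, forcing $q(Y)=\dim(Y)$ once $a_Y$ is shown surjective, which follows from surjectivity of $f_*$ (an algebraic fiber space by Proposition 2.1 of \cite{HAC1}). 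So this part is essentially bookkeeping with diagram (\ref{d14}).

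The main obstacle is the plurigenus estimate $P_2(Y)=1$: one must convert a hypothetical second pluricanonical section on $Y$ into enough sections on $X$ to violate $P_m(X)\leq 2m-2$, and doing this cleanly requires carefully tracking the torsion twists $Q$ — since $\omega_X^m\otimes Q$ is what has sections, not $\omega_X^m$ itself — and controlling how $f_*(\omega_X^m\otimes Q)$ relates to $\omega_Y^m$ twisted by a nef bundle, so that Lemma \ref{J4} can be applied on $Y$ and the resulting inequality pulled back to $X$. I expect this is where the bulk of the work in \cite{CH4}'s Propositions 3.6 and 3.7 lies, and where I would need to be most careful.
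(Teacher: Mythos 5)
Your proposal rests on a false premise: you assert that ``by construction $Y$ is of general type (its own Iitaka fibration is the identity),'' and you build the whole strategy ($P_2(Y)\geq 1$ from general type, then rule out $P_2(Y)\geq 2$) on it. The base of an Iitaka fibration need not be of general type, and here it demonstrably is not: the conclusion of the proposition is that $Y$ is birational to an abelian variety, so $\kappa(Y)=0$ whenever $\dim(Y)\geq 1$. The paper's proof goes in exactly the opposite direction: it proves the claim $(\dag)$ that $V_0(\omega_X)\cap f^*\Pic^0(Y)=\{\cO_X\}$, deduces $f^*V_0(\omega_Y)\subset\{\cO_X\}$ from the effectivity of $K_{X/Y}$, concludes $\kappa(Y)=0$ by Chen--Hacon, and then applies Kawamata's characterization ($q=\dim$, $\kappa=0$) rather than the $P_2=1$ characterization. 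Your bookkeeping for $q(Y)=\dim(Y)$ via diagram (\ref{d14}) is fine, but the pluricanonical part of your argument cannot be repaired as stated because its starting point contradicts the statement being proved.

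There is a second, independent gap in the numerical mechanism. The hypothesis is $0<P_m(X)\leq 2m-2$ for \emph{one fixed} $m$, and you propose to contradict it by forcing $P_{2m}(X)>4m-2$, asserting parenthetically that the upper bound ``propagates to all multiples of $m$ \ldots by subadditivity.'' Subadditivity of plurigenera produces \emph{lower} bounds on $P_{km}$ from $P_m$; it gives no upper bound on $P_{2m}$ from an upper bound on $P_m$, so the inequality you reach is not a contradiction with the hypothesis. The paper avoids this by incrementing the exponent one step at a time (via Lemma \ref{J4} when the relevant component of $V_0(\omega_X)\cap f^*\Pic^0(Y)$ has dimension $\geq 2$, and via the more delicate Lemma \ref{5.10} with the ample direct image on an elliptic curve when it has dimension $1$), landing exactly on $h^0(X,\omega_X^m\otimes P^m)\geq 2m-1$ for the given $m$, which Proposition \ref{J2} converts into $P_m(X)\geq 2m-1$. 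The one-dimensional case, which is where most of the work lies, is not addressed in your sketch at all.
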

\begin{proof} Since we have $0<P_m(X)\leq 2m-2$ for some $m\geq 2$, $a_X$ is
surjective by Theorem \ref{J1}. Since $q(X)=\dim (X)$, we saw in \S\ref{s14}
that $a_X$ and $a_Y$ are both surjective and generically finite. We
then use diagram (\ref{d14}) and the notation of \S\ref{s14}.

If $\kappa(X)=1$, then $Y$ is an elliptic curve, because $a_Y$ is
surjective.

If $\kappa(X)\geq 2$, we use the same argument as in the proof of \cite{CH4},
Proposition 3.6. We claim that
\begin{itemize}
\item[(\dag)] $V_0(\omega_X)\cap f^*\Pic^0(Y)=\{\cO_X\}.$
\end{itemize} Let $\delta$ be the maximal dimension of a component of
$V_0(\omega_X)\cap f^*\Pic^0(Y)$.

If $\delta=0$, $V_0(\omega_X)\cap f^*\Pic^0(Y)=\{\cO_X\}$ by \cite{CH2},
Proposition 1.3.3.

If $\delta\geq 2$, by Lemma \ref{J4}, there exists $P_0\in
f^*\Pic^0(Y)$ such that $$h^0(X, \omega_X^{2}\otimes P_0)\geq
1+1+2-1=3.$$ By Proposition \ref{J2}, $h^0(X, \omega_X^{2}\otimes
P)=h^0(X, \omega_X^{2}\otimes P_0)\geq 3$  for any $P\in
f^*\Pic^0(Y)$. We iterate this process and get $P_m(X)\geq 2m-1$,
which is a contradiction.

If $\delta=1$, there is a $1$-dimensional component $T$ of $
V_0(\omega_X)\cap f^*\Pic^0(Y)$. Let $E=\Pic^0(T)$ and let $g:
X\rightarrow E$ be the induced surjective morphism. By Corollary 2.11 and Lemma
2.13 in \cite{CH4}, for some torsion element $P\in T$, there exist a
line bundle $L$ of degree $1$ on $E$ and an inclusion
$g^*L\hookrightarrow \omega_X\otimes P$, and $P|_F=\cO_F$, where $F$ is
a general fiber of $g$. Since $\kappa(X)\geq 2$, we have
$\kappa(F)\geq 1$. Again by Theorem 3.2 in \cite{CH1},
$\rank(g_*(\omega_X^{ 2}\otimes P^{2}))=P_2(F)\geq 2$. Consider the
exact sequence of sheaves on $E$:
$$0\rightarrow L^{2}\rightarrow g_*(\omega_X^{
2}\otimes P^{2})\rightarrow \cQ\rightarrow 0,$$ where
$\rank(\cQ)\geq 1$. Since $g: X\rightarrow C$ is dominated by $f: X\rightarrow Y$, the Iitaka model of $X$ (i.e. Y) dominates $E$, hence $ g_*(\omega_X^{2}\otimes P^{2})$ is ample
by Corollary \ref{J3}, so is $\cQ$ and $h^0(X, \cQ)\geq 1$. Hence $h^0(X,
\omega_X^{2}\otimes P^{2})\geq 3$.

For any $k\geq 3$, we apply Lemma \ref{5.10} (to be proved below) to get
\begin{eqnarray*}h^0(X, \omega_X^{k}\otimes P^{
k})\geq h^0(X, \omega_X^{k-1}\otimes P^{k-1})+2.\end{eqnarray*}

By induction, we have $h^0(X, \omega_X^{ m}\otimes P^{m})\geq 2m-1$ for all $m\geq 2$. Since $P\in T\subset f^*\Pic^0(Y)$, we have, by
Proposition \ref{J2}, $P_m(X)=h^0(X, \omega_X^{ m}\otimes P^{m})\geq
2m-1$, which is a contradiction.
 We have proved   claim $(\dag)$.

\medskip
Since $X$ and $Y$ are of maximal Albanese dimension, $K_{X/Y}$ is
effective (see the proof of Lemma 3.1 in \cite{CH1}). This implies
$$f^*V_0(\omega_Y)\subset V_0(\omega_X)\cap f^*\Pic^0(Y)=\{\cO_X\},$$
and hence $\kappa(Y)=0$ by Theorem 1 in \cite{CH2}. By Kawamata's
Theorem (\cite{KA}), $a_Y$ is birational.
\end{proof}

We still need to prove the following result used in the proof of the proposition. It
  is an analogue of Corollary 3.2 in \cite{CH4}.

\begin{lemm}\label{5.10}Let $X$ be a smooth projective variety of maximal Albanese dimension with $\kappa(X)\geq 2$.
Suppose that there exist a surjective morphism $g: X\rightarrow C$ onto
an elliptic curve $C$  and  an ample line
bundle $L$ on $C$ with an inclusion $g^*L\hookrightarrow
\omega_X\otimes P_{2}$ for some torsion line bundle $P_2\in
\Pic^0(X)$. Then we have
$$h^0(X, \omega_X^m\otimes P_1\otimes P_2)\geq h^0(X,
\omega_X^{m-1}\otimes P_1)+2,$$ for all torsion line bundles $P_1\in
V_0(\omega_X)$ and all $m\geq 3$.
\end{lemm}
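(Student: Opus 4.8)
The plan is to combine the given inclusion $g^*L \hookrightarrow \omega_X \otimes P_2$ with the constancy result of Proposition~\ref{J2} and the ampleness statement of Corollary~\ref{J3}. First I would note that since $P_1 \in V_0(\omega_X)$, we have $h^0(X, \omega_X^{m-1} \otimes P_1) \geq h^0(X, \omega_X \otimes P_1) > 0$; moreover, because $C$ is an elliptic curve and the Iitaka model of $X$ dominates $C$ (as $\kappa(X) \geq 2$ forces the Iitaka fibration to factor through $g$, or at least the relevant positivity holds after the standard reduction), Corollary~\ref{J3} applies to the relevant pushforwards. Tensoring the inclusion $g^*L \hookrightarrow \omega_X \otimes P_2$ by $\omega_X^{m-1} \otimes P_1$ gives an inclusion $g^*L \otimes \omega_X^{m-1} \otimes P_1 \hookrightarrow \omega_X^m \otimes P_1 \otimes P_2$.

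The key step is to push forward to $C$. Setting $\cF = g_*(\omega_X^{m-1} \otimes P_1)$ and $\cG = g_*(\omega_X^m \otimes P_1 \otimes P_2)$, the inclusion above yields (after checking it remains injective on pushforward, which holds since $g^*L$ is a line bundle and the map is generically an isomorphism onto a subsheaf) an injection $L \otimes \cF \hookrightarrow \cG$ of sheaves on $C$. Then $h^0(C, \cG) \geq h^0(C, L \otimes \cF) + h^0(C, \cQ)$ where $\cQ$ is the quotient, and I would want $h^0(C, L \otimes \cF) \geq h^0(C, \cF) + 2$ using $\deg L \geq 1$ together with the fact that $\cF$ is a nef (indeed ample, by Corollary~\ref{J3} applied with $Q = P_1$) vector bundle on the elliptic curve $C$: for such a bundle of rank $s$, twisting by an ample line bundle of degree $d \geq 1$ raises $h^0$ by at least $\deg L \cdot s \geq s \geq \dots$; more carefully, for an ample vector bundle $\cF$ on an elliptic curve one has $h^0(C,\cF) = \deg \cF$ (by I.T.\ vanishing as in Corollary~\ref{J3}), so $h^0(C, L\otimes \cF) - h^0(C,\cF) = \deg L \cdot \rank \cF \geq \rank \cF \geq 2$ provided $\rank \cF = P_{m-1}(F) \geq 2$, which follows from $\kappa(F) \geq 1$ via Theorem~3.2 of \cite{CH1} as in the proof of Proposition~\ref{5.1}.

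Finally I would assemble: $h^0(X, \omega_X^m \otimes P_1 \otimes P_2) = h^0(C, \cG) \geq h^0(C, L \otimes \cF) \geq h^0(C, \cF) + \rank\cF \geq h^0(C, \cF) + 2 = h^0(X, \omega_X^{m-1} \otimes P_1) + 2$, where the first and last equalities are Leray (the higher direct images contribute no $H^0$) combined with Proposition~\ref{J2} to identify $h^0(X,\omega_X^{m-1}\otimes P_1)$ with $h^0(C,\cF)$. One subtlety to handle: if $\cF$ is merely nef rather than ample (e.g.\ if $g(C)=1$ and $\cF$ has a trivial summand), the clean formula $h^0 = \deg$ can fail, so I would instead argue directly that tensoring by an ample line bundle of positive degree strictly increases $h^0$ by at least the rank, using the Riemann–Roch estimate $h^0 \geq \deg$ for nef bundles together with vanishing of $h^1$ for $L \otimes \cF$ (which is I.T.\ of index $0$ since $L\otimes\cF$ is ample).

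\textbf{Main obstacle.} The step I expect to be most delicate is verifying that the pushed-forward map $L \otimes \cF \hookrightarrow \cG$ is genuinely injective \emph{as a map of sheaves on $C$} and extracting the rank-$\geq 2$ bound on the relevant pushforward uniformly in $m$ — i.e.\ making sure the quotient $\cG / (L\otimes\cF)$ has positive rank, which is where the hypothesis $\kappa(X)\geq 2$ (forcing $\kappa(F)\geq 1$ on the general fiber of $g$, hence $P_{m-1}(F)\geq 2$) enters and must be invoked correctly. The positivity inputs (Corollary~\ref{J3}, the I.T.\ vanishing) are then routine, but this rank bookkeeping is the crux.
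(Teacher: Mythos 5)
Your overall skeleton matches the paper's: push the inclusion $g^*L\hookrightarrow\omega_X\otimes P_2$ forward to get $L\otimes\cF\hookrightarrow\cG$ with $\cF=g_*(\omega_X^{m-1}\otimes P_1)$ and $\cG=g_*(\omega_X^m\otimes P_1\otimes P_2)$, use Corollary \ref{J3} to get ampleness, hence $h^1=0$ and $h^0=\deg$ on the elliptic curve, and then do rank/degree bookkeeping. But there is a genuine gap at the crux you yourself flagged. You claim $\rank\cF=P_{m-1}(F)\geq 2$. That identification is wrong in general: $\rank\cF=h^0(F,\omega_F^{m-1}\otimes P_1|_F)$, and since $P_1$ is only assumed to lie in $V_0(\omega_X)$, its restriction to a general fiber $F$ of $g$ need not be trivial, so this number need not equal $P_{m-1}(F)$ and can perfectly well be $1$. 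Your chain $h^0(\cG)\geq h^0(L\otimes\cF)\geq h^0(\cF)+\rank\cF\geq h^0(\cF)+2$ then only yields $+1$. The paper handles exactly this: it splits into the cases $\rank\cF\geq 2$ (where your degree computation works) and $\rank\cF=1$, and in the latter case it gets the missing $+1$ from $h^0(C,\cQ)\geq 1$, where $\cQ=\cG/(L\otimes\cF)$ is ample of positive rank. To know $\cQ$ has positive rank one needs $\rank\cG\geq 2$, i.e.\ $h^0(F,\omega_F^m\otimes(P_1\otimes P_2)|_F)\geq 2$, and this is itself not immediate: the paper first shows this $h^0$ is nonzero, then uses Lemma \ref{5.11} and Proposition \ref{ch1} to find $P'\in\Pic^0(F)$ pulled back from the Iitaka fibration of $F$ with $(P_1\otimes P_2)|_F\otimes P'\in V_0(\omega_F)$, and invokes Proposition \ref{J2} to conclude $h^0(F,\omega_F^m\otimes P_1\otimes P_2)=h^0(F,\omega_F^m\otimes P_1\otimes P_2\otimes P')\geq P_{m-1}(F)\geq 2$. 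This twist-by-$P'$ reduction is the idea missing from your proposal.

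A second, smaller gap: your justification that the Iitaka model of $X$ dominates $C$ (``$\kappa(X)\geq 2$ forces the Iitaka fibration to factor through $g$'') is not an argument; $\kappa(X)\geq 2>\dim C$ does not by itself produce a factorization $Y\to C$. The paper derives domination from the hypothesis itself: Proposition \ref{ch1}(2) gives a nonzero section of $\omega_X\otimes P_2^{\vee}\otimes P$ for suitable torsion $P$, which combined with $g^*L\hookrightarrow\omega_X\otimes P_2$ yields $g^*L\hookrightarrow\omega_X^2\otimes P$, whence $NK_X\succeq g^*(\text{ample})$. Without this step you are not entitled to apply Corollary \ref{J3} to $\cF$, $\cG$ and $\cQ$.
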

\begin{proof}From the inclusion, we obtain $H^0(X, \omega_X\otimes P_2)\neq 0$, and by items (1) and (2) in Proposition \ref{ch1}, we conclude that $P_2\in V_0(\omega_X, \chi_i)$ for some $i$ and we get an exact sequence of sheaves on $C$:
\begin{eqnarray}\label{d15}
0\rightarrow g_*(\omega_X^{m-1}\otimes P_{1})\otimes
L\hookrightarrow g_*(\omega_X^m\otimes P_1\otimes P_2)\rightarrow
\cQ\rightarrow 0.
\end{eqnarray}
By item (2) in Proposition \ref{ch1}, we have $h^0(X, \omega_X\otimes
P_2^{\vee}\otimes P)\neq 0$ for some $P\in f^*\Pic^0(Y)$ such that
$P_2^{\vee}\otimes P\in V_0(\omega_X, -\chi_i)$. Hence we have an inclusion
$g^*L\hookrightarrow \omega_X^2\otimes P$. Moreover, since $P_2$ is a torsion line
bundle and $V_0(\omega_X)$ is a   subtorus of $\Pic^0(X)$ translated by a torsion point, we may assume that
$P\in f^*\Pic^0(Y)$ is also a torsion line bundle. Therefore the Iitaka model of $X$
dominates $C$. Thus, by Corollary \ref{J3}, both
$g_*(\omega_X^{m-1}\otimes P_{1})$ and $g_*(\omega_X^m\otimes
P_1\otimes P_2)$ are ample, and so is $\cQ$. By Serre duality, for
any ample vector bundle $V$ on $C$, we have $H^1(C, V)=0$. Hence,
Riemann-Roch gives
$$h^0(X, \omega_X^{m-1}\otimes P_1)=h^0(C, g_*(\omega_X^{m-1}\otimes
P_{1}))=\deg(g_*(\omega_X^{m-1}\otimes P_{1})),$$ and \begin{eqnarray*}h^0(
\omega_X^m\otimes P_1\otimes P_2)&=&h^0(C, g_*(\omega_X^{m-1}\otimes
P_1)\otimes L)+h^0(C, \cQ).\end{eqnarray*} Let $F$ be a connected component of
a general fiber of $g$. Since $\kappa(X)\geq 2$, we have
$\kappa(F)\geq 1$ by the easy addition formula (Corollary 1.7 in
\cite{Mo}). Hence we have $P_2(F)\geq 2$ by Theorem 3.2 in
\cite{CH1} (see also Remark \ref{5.8}). Since $P_1\in
V_0(\omega_X)$, we have $h^0(X, \omega_X^m\otimes P_1\otimes
P_2)\geq h^0(X, \omega_X^{m-1}\otimes P_2)> 0.$ Hence we have
$h^0(F,\omega_F^m\otimes P_1\otimes P_2)\neq 0$. Then, by Lemma
\ref{5.11} and Proposition \ref{ch1}, there exists $P'\in \Pic^0(F)$ which is pulled
back by the Iitaka fibration of
$F$ such that $(P_1\otimes P_2)|_F\otimes P'\in V_0(\omega_F)$. 
On the other hand, since $P_1\otimes P_2$ is torsion, we have $h^0(F, \omega_F^m\otimes P_1\otimes
P_2)=h^0(F, \omega_F^m\otimes P_1\otimes P_2\otimes P^{'})$ by Proposition
\ref{J2}. Therefore, we conclude
$$
h^0(F,
\omega_F^m\otimes P_1\otimes P_2) =  h^0(F, \omega_F^m\otimes
P_1\otimes P_2\otimes P^{'})  \geq   P_{m-1}(F) \geq
2,
$$
where the last inequality holds since $m\geq 3$.
Hence,
$$\rank(g_*(\omega_X^m\otimes P_1\otimes P_2))=h^0(F,
\omega_F^m\otimes P_1\otimes P_2)\geq 2.$$

Since $P_1\in V_0(\omega_X)$ by assumption, we have
$\rank(g_*(\omega_X^{m-1}\otimes P_{1}))\geq 1$.

If $\rank(g_*(\omega_X^{m-1}\otimes P_{1}))\geq 2$, we have
\begin{eqnarray*}h^0(C, g_*(\omega_X^m\otimes P_1\otimes P_2))&\geq&
h^0(C, g_*(\omega_X^{m-1}\otimes P_{1})\otimes
L)\\&\geq&\deg(g_*(\omega_X^{m-1}\otimes
P_{1}))+\rank(g_*(\omega_X^{m-1}\otimes P_{1}))\\&\geq &h^0(X,
\omega_X^{m-1}\otimes P_{1})+2. \end{eqnarray*}

If $\rank(g_*(\omega_X^{m-1}\otimes P_{1}))=1$, $\cQ$ has
rank $\geq 1$. Since $\cQ$ is ample, $h^0(C, \cQ)\geq 1$. We also
have
\begin{eqnarray*}h^0(X, \omega_X^m\otimes P_1\otimes P_2)&=& h^0(C,
g_*(\omega_X^{m-1}\otimes P_{1})\otimes L)+h^0(C, \cQ)\\&\geq &
h^0(X, \omega_X^{m-1}\otimes P_1)+ \rank(\omega_X^{m-1}\otimes
P_1)+1 \\&=& h^0(X, \omega_X^{m-1}\otimes P_1)+2.\end{eqnarray*}
Hence the lemma is proved.
\end{proof}

Under the hypotheses of Proposition \ref{5.1}, it turns out that when $m\ge 4$, we can bound the Kodaira dimension of $X$ by 1 (the case $m=2$ is the object of the next section; when $m=3$, the bound $\kappa(X)\le 2$ was obtained in \cite{CH4} and there are examples when there is equality).

\begin{theo}\label{4.11}Let $X$ be a smooth projective variety with $q(X)=\dim (X)$ and $0< P_m(X)\leq 2m-2$  for some $m\geq 4$. Then $\kappa(X)\leq 1$.
\end{theo}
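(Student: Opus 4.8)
The plan is to argue by contradiction: assume $\kappa(X)\ge 2$. By Theorem \ref{J1} the Albanese morphism $a_X$ is surjective, and since $q(X)=\dim(X)$, $a_X$ is generically finite, so $X$ has maximal Albanese dimension; we may then apply all the machinery of \S\ref{s14}. Fix an Iitaka fibration model $f\colon X\to Y$; by Proposition \ref{5.1}, $Y$ is birational to an abelian variety, so $f^*\Pic^0(Y)$ is an abelian subvariety of $\Pic^0(X)$ of dimension $\kappa(X)\ge 2$, and the claim $(\dag)$ from the proof of Proposition \ref{5.1} gives $V_0(\omega_X)\cap f^*\Pic^0(Y)=\{\cO_X\}$. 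The strategy is to bootstrap lower bounds on $h^0(X,\omega_X^m\otimes P)$ for a well-chosen torsion $P$ until $P_m(X)\ge 2m-1$.

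First I would analyze the maximal components of $V_0(\omega_X)$. Pick a maximal component $T=T_{\chi_{i,s}}$; since $\kappa(X)>0$ we have $\dim(T)\ge 1$. By Proposition \ref{5.4}, $T$ meets every positive-dimensional component of $V_0(\omega_X)$ in dimension $\ge 1$. I want to locate a component that, together with $T$, generates something large. If there is a component $T'=T_{\chi_{j,t}}$ with $\dim(T+T')>\dim(T)$ (which must happen unless all positive-dimensional components are contained in a single subtorus $T$), then applying Lemma \ref{J4} along $T\cap T'$ (using $h^0(\omega_X\otimes P)\ge 1$ on a translate of $T$ and $h^0(\omega_X\otimes P^{-1}\otimes(\text{torsion}))\ge 1$ on a translate of $T'$) produces, after a translate, $h^0(X,\omega_X^2\otimes P_0)\ge 1+1+\dim(T\cap T')\ge 3$ for $P_0$ in a positive-dimensional subtorus $S\subset\Pic^0(X)$; if the Iitaka model dominates the corresponding quotient, Proposition \ref{J2} propagates this and one iterates as in the $\delta\ge 2$ case of Proposition \ref{5.1} to get $P_m(X)\ge 2m-1$. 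The remaining case is that every positive-dimensional component of $V_0(\omega_X)$ lies in one subtorus $T$; then I would pass to the abelian-variety quotient $E$ dual to a one-dimensional subtorus of $T$ not contained in $f^*\Pic^0(Y)$ (such exists since $T\cap f^*\Pic^0(Y)=\{\cO_X\}$ forces $\dim T\ge 1$ to be "transverse"), obtaining a fibration $g\colon X\to C$ onto an elliptic curve with $g^*L\hookrightarrow\omega_X\otimes P$ of degree $1$, exactly the input of Lemma \ref{5.10}; then $h^0(X,\omega_X^m\otimes P^m)\ge h^0(X,\omega_X^{m-1}\otimes P^{m-1})+2$ for $m\ge 3$, and since $h^0(X,\omega_X^2\otimes P^2)\ge 3$ (again from Corollary \ref{J3} applied to the quotient sheaf, as in Proposition \ref{5.1}) we get $h^0(X,\omega_X^m\otimes P^m)\ge 2m-1$, contradicting $P_m(X)\le 2m-2$ via Proposition \ref{J2} once $P$ is arranged to lie in $f^*\Pic^0(Y)$.

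The point where $m\ge 4$ is genuinely needed, rather than $m\ge 3$, is in controlling how the various inequalities chain together: the $m=3$ case only yields $\kappa(X)\le 2$ in \cite{CH4} because the increment of $2$ per step starting from $h^0(\omega_X^2\otimes\cdot)\ge 3$ gives $P_3\ge 5$, which contradicts $P_3\le 4$ but not a weaker hypothesis; with $m\ge 4$ the bound $P_m\le 2m-2$ is strong enough that the same telescoping $P_m(X)\ge 3+2(m-2)=2m-1$ is already contradictory, and moreover one must check that the bootstrapping in Lemma \ref{5.10} can be initiated, which requires $\kappa(F)\ge 1$ for the fiber $F$ of $g$ and hence, via easy addition, $\kappa(X)\ge 2$.

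The main obstacle I anticipate is the reduction to the elliptic-curve case: showing that when $V_0(\omega_X)$ has a positive-dimensional component, one can always either generate a strictly larger subtorus via Lemma \ref{J4} (handled by Proposition \ref{5.4} plus a dominance check for Proposition \ref{J2}) or else extract the degree-$1$ inclusion $g^*L\hookrightarrow\omega_X\otimes P$ feeding Lemma \ref{5.10}. Verifying that in the latter case the relevant fibration factors through $Y$ (so that "the Iitaka model of $X$ dominates $C$" holds and Corollary \ref{J3} applies) and that the torsion line bundle can be taken in $f^*\Pic^0(Y)$ so that Proposition \ref{J2} converts the bound on $h^0(\omega_X^m\otimes P^m)$ into a bound on $P_m(X)$ — these bookkeeping steps, rather than any single deep input, are where the care lies. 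I expect the argument to mirror closely the structure of the proof of Proposition \ref{5.1}, with Lemma \ref{5.10} doing the arithmetic heavy lifting.
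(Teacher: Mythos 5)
Your overall setup (reduce to $Y$ abelian via Proposition \ref{5.1}, study maximal components of $V_0(\omega_X)$, bootstrap with Lemma \ref{J4} and Lemma \ref{5.10}, convert to $P_m(X)$ via Proposition \ref{J2}) matches the paper's opening moves, but there is a genuine gap: your two-branch dichotomy never produces a contradiction in the hardest case, which is where essentially all the work in the paper lies. Concretely, the paper first shows that a maximal component $T_{\chi_{1,s}}$ must have dimension exactly $2$ (dimension $1$ is excluded by Proposition \ref{5.4} together with Theorem 2.3 of \cite{CH2}, and dimension $\geq 3$ by the Lemma \ref{J4} iteration). With $\dim(T_{\chi_{1,s}})=2$, iterating Lemma \ref{J4} only yields $h^0(X,\omega_X^{m-1}\otimes P_{\chi_{1,s}}^{m-1})\geq 2m-3$, hence $P_m(X)\geq 2m-3$, which is consistent with $P_m(X)\leq 2m-2$. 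Your claimed telescoping to $2m-1$ requires a second positive-dimensional component carrying a character that cancels against $(m-1)\chi_1$ (so that the product lands in $f^*\Pic^0(Y)$ and Proposition \ref{J2} applies); this is exactly what the paper's Claim 1 exploits, and it is only available when $(m-1)\chi_1\neq 0$. Note also that by claim $(\dag)$ there are no positive-dimensional components of $V_0(\omega_X)$ inside $f^*\Pic^0(Y)$ itself, so the character bookkeeping you defer to the end is not bookkeeping: it is the obstruction.

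When $(m-1)\chi_1=0$ and (Claim 2) $\overline{G}\simeq\mathbb{Z}/2$, neither of your branches applies, and the paper must take an entirely different route: it combines $P_{m-3}(X)\geq 2m-7$ with subadditivity to force $P_3(X)\leq 4$, invokes the Chen--Hacon classification of varieties with $q=\dim$ and $P_3\leq 4$ (\cite{CH4}), and then kills the one surviving family (a double cover of $\Alb(X)$ with $\kappa=2$) by the explicit computation $P_m(X)\geq (m-1)^2>2m-2$. This external classification input and final computation are indispensable and absent from your proposal. Relatedly, your account of where $m\geq 4$ enters is not accurate: it is needed so that $m-3\geq 1$ with $(m-3)\chi_1=0$ (using that $m$ is odd, from Claim 2) and so that $(m-1)^2>2m-2$ in the last step, not merely to make a $+2$-per-step telescoping close.
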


\begin{proof}By Theorem \ref{J1}, the Albanese morphism $a_X: X\rightarrow \Alb(X)$
is surjective and hence generically finite. We then use diagram (\ref{d14}). By Proposition \ref{5.1}, we may assume that $Y$, the image of
the Iitaka fibration of $X$, is an abelian variety.

We   assume $\kappa(X)\geq 2$
and under this assumption we will deduce a contradiction.

Let $T_{\chi_{1,s}}$ be a maximal component of $V_0(\omega_X)$ in
the sense of Definition \ref{5de}. If $\dim (T_{\chi_{1,s}})=1$, by
Proposition \ref{5.4}, we conclude that $T_{\chi_{i,t}}\subset
T_{\chi_{1,s}}$ for any $(i,t)$ such that $\dim (T_{\chi_{i,t}})>0$.
By Theorem 2.3 in \cite{CH2}, $\Pic^0(Y)=T_{\chi_{1,s}}$. Then $\dim
(Y)=\dim ( \Pic^0(Y))=1$, which contradicts our assumption   that
$\kappa(X)\geq 2$. Hence we get   $\dim (T_{\chi_{1,s}})\geq 2$.

We then iterate Lemma \ref{J4} to get $$h^0(X, \omega_X^{m-i}\otimes P_{\chi_{1,s}}^{m-i})
\geq (m-i-1)\dim (T_{\chi_{1,s}})+1.$$
By Proposition \ref{J2}, we have
\begin{eqnarray}\label{d18}h^0(X, \omega_X^{m-i}\otimes P_{\chi_{1,s}}^{m-i}\otimes f^*P)
\geq (m-i-1)\dim (T_{\chi_{1,s}})+1,
\end{eqnarray}
for all $0\leq i\leq m-2$ and all $P\in\Pic^0(Y)$. According to item (2) in
Proposition \ref{ch1}, $V_0(\omega_X, -(m-1)\chi_1)$ is not empty, namely
there exists $P_0\in \Pic^0(Y)$ such that $h^0(X, \omega_X\otimes
P_{\chi_{1,s}}^{-(m-1)}\otimes P_0)>0$. Thus $h^0(X, \omega_X^m\otimes P_0)\geq h^0(X, \omega_X^{m-1}\otimes P_{\chi_{1,s}}^{m-1})$. Again by Proposition \ref{J2}, we have
$$P_m(X)=h^0(X, \omega_X^m\otimes P_0)\geq h^0(X, \omega_X^{m-1}\otimes P_{\chi_{1,s}}^{m-1}).$$
We also have
$$2m-2\geq P_m(X) \ge h^0(X,
\omega_X^{m-1}\otimes P_{\chi_{1,s}}^{m-1}) \geq   (m-2)\dim
(T_{\chi_{1,s}})+1,$$ where the last inequality holds by
taking $i=1$ in (\ref{d18}). Hence we deduce that $\dim (T_{\chi_{1,s}})=2$.

\medskip\noindent{\bf Claim 1:}  $(m-1)\chi_1=0$.

If $(m-1)\chi_1\neq 0$, by item (3) in Proposition \ref{ch1}, there exists a torsion
point $P_{-(m-1)\chi_{1, t}}\in \Pic^0(X)$ such that
$P_{-(m-1)\chi_{1, t}}+T_{-(m-1)\chi_{1, t}}\subset V_0(\omega_X)$
with $\dim (T_{-(m-1)\chi_{1, t}})\geq 1$.

If $\dim (T_{-(m-1)\chi_{1, t}})\geq 2$, by (\ref{d18}) (let $i=1$) and
Lemma \ref{J4}, we get $P_m(X)\geq 2m-3+1+2-1=2m-1$, which is a
contradiction.

Hence $\dim (T_{-(m-1)\chi_{1, t}})=1$. Let
$C=\widehat{T}_{-(m-1)\chi_{1, t}}$ and let $\pi: \Alb(X)\rightarrow
C$ be the dual of the inclusion
$T_{-(m-1)\chi_{1, t}}\hookrightarrow\Pic^0(X)$. Then we set
$f=\pi\circ a_X$ as in the following commutative diagram:
\begin{eqnarray*}
\xymatrix{
X\ar[d]^{a_X}\ar[dr]^{f}\\
\Alb(X)\ar[r]^{\pi}&C.}
\end{eqnarray*}
Since we assume   $\kappa(X)\geq 2$ and $\dim (T_{-(m-1)\chi_{1, t}})=1$, we have $V_0(\omega_X)\neq \Pic^0(X)$, therefore $\chi(\omega_X)=0$. By Lemma 2.10 and Corollary 2.11 in \cite{CH4}, there exists
an ample line bundle $L$ on $C$ such that $f^*L\hookrightarrow
\omega_X\otimes P_{-(m-1)\chi_{1, t}}$. We then apply Lemma
\ref{5.10} to conclude that
\begin{eqnarray*}P_m(X)&=&h^0(X, \omega_X^m\otimes P_{\chi_{1,s}}^{m-1}\otimes P_{-(m-1)\chi_{1, t}})\\
&\geq& h^0(X, \omega_X^{(m-1)}\otimes
P_{\chi_{1,s}}^{m-1})+2\\&\geq& 2m-1,\end{eqnarray*} where the last inequality holds by (\ref{d18}). This is a
contradiction. We have proved   Claim 1.
\medskip

Let $\overline{G}$ be defined as in the beginning of \S\ref{s14}.

\medskip\noindent{\bf Claim 2:}  $\overline{G}\simeq \mathbb{Z}/2$,
namely $\overline{G}$ contains only one nonzero element $\chi_1$. In
particular, by Claim 1, $m$ is an odd number.

Assuming the claim is not true, there exists $0\neq \chi_2\in
\overline{G}$ such that $(m-2)\chi_1+\chi_2\neq 0$. According to item (3) in
Proposition \ref{ch1}, there exists $P_{\chi_{2,t}}+T_{\chi_{2,t}}\subset
V_0(\omega_X, \chi_2)$ with $\dim (T_{\chi_{2,t}})\geq 1$. Then as in the proof of Claim 1, by Lemma \ref{J4} and Lemma \ref{5.10},
we conclude
$$
h^0(X, \omega_X^{m-1}\otimes P_{\chi_{1,s}}^{m-2}\otimes P_{\chi_{2,t}}) \ge  h^0(X, \omega_X^{m-2}\otimes P_{\chi_{1,s}}^{m-2})+2 \ge 2m-3,$$
 where the last inequality holds because
of (\ref{d18}).

Since $(m-2)\chi_1+\chi_2\neq 0$, we may repeat the above process to
get
$$
P_m(X) \geq  h^0(X, \omega_X^{m-1}\otimes P_{\chi_{1,s}}^{m-2}\otimes P_{\chi_{2,t}})+2\\
 \geq  2m-1,$$
which is a contradiction. Hence we have proved Claim 2.\\

As $m\geq 4$ is odd, $m-2\geq 3$ and $(m-3)\chi_{1}=0$. By (\ref{d18}) (with $i=3$), $P_{m-3}(X)\geq 2m-7$. Since $\kappa(X)\geq 2$, by
Proposition \ref{J2} and Lemma \ref{J4}, we have
\begin{eqnarray*}
2m-2 \geq P_{m}(X) &\geq &P_{m-3}(X)+P_3(X)+\kappa(X)-1\\&\geq
&2m-6+P_3(X).
\end{eqnarray*}
Hence $P_3(X)\leq 4$. According to Chen and Hacon's classification
of these varieties (\cite{CH4}, see Theorems 1.1 and   1.2)
and Claim 2, the only possibility is that $X$ is a double cover of
its Albanese variety and $\kappa(X)=2$, as described in Example 2 in
\cite{CH4}. Namely, there exists an algebraic fiber space
\begin{eqnarray*}q: \Alb(X)\rightarrow S
\end{eqnarray*} from an abelian variety of dimension $\geq 3$ to an abelian surface, and $a_X: X\rightarrow \Alb(X)$
is birational to a double cover of $\Alb(X)$ such that
$a_{X*}\cO_X=\cO_{\Alb(X)}\oplus (q^*L\otimes P)^{\vee}$, where $L$
is an ample divisor of $S$ with $h^0(S, L)=1$ and $P\in \Pic^0(A)\moins\Pic^0(S)$ and $2P\in\Pic^0(S)$. However, for such a
variety, we have the inclusion of sheaves $a_X^*(q^*L\otimes
P)\hookrightarrow \omega_X$ (see the proof of Claim 4.6 in
\cite{CH4}). Thus, as $m\geq 4$ is odd,
\begin{eqnarray*}P_m(X)&=&h^0(X, \omega_X^m)\\
&\geq& h^0(\Alb(X), q^*L^{m}\otimes P^m\otimes a_{X*}\cO_X)\\
&=&h^0(\Alb(X), q^*L^{m-1}\otimes P^{m-1})\\&=&
(m-1)^2 > 2m-2,\end{eqnarray*} which is a contradiction. This
concludes the proof of Theorem \ref{4.11}.
\end{proof}

\section{Varieties with  $q(X)=\dim (X)$ and  $P_2(X)=2$}

In this section, we describe  explicitly all  smooth projective varieties $X$ with $q(X)=\dim (X)$ and  $P_2(X)=2$.
 We first show that the Iitaka model of   $X$   is an elliptic curve. In particular, $\kappa(X)=1$.

\begin{prop}\label{5.5}Let $X$ be a smooth projective variety with $q(X)=\dim (X)$ and  $P_2(X)=2$. Assume that $f: X\rightarrow Y$ is a birational
model of the Iitaka fibration of $X$ and $Y$ is a smooth projective
variety. Then $Y$ is an elliptic curve.
\end{prop}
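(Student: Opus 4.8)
The plan is to combine Proposition \ref{5.1} with a dimension count. By Proposition \ref{5.1}, applied with $m=2$, the Iitaka model $Y$ is birational to an abelian variety; after further blow-ups we may assume $Y$ itself is birational to an abelian variety and that $a_Y: Y\to \Alb(Y)$ is birational. So it suffices to show $\kappa(X)=\dim(Y)=1$. Suppose for contradiction that $\kappa(X)=\dim(Y)\geq 2$. We will then produce too many sections of $\omega_X^2$ (twisted by an appropriate torsion line bundle), contradicting $P_2(X)=2$.

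First I would invoke the claim $(\dag)$ proved inside Proposition \ref{5.1}: under the hypothesis $\kappa(X)\geq 2$ we have $V_0(\omega_X)\cap f^*\Pic^0(Y)=\{\cO_X\}$. Combined with Chen--Hacon's Proposition \ref{ch1}(1), this forces every irreducible component of $V_0(\omega_X)$ to lie in some translate $P_{\chi_i}+f^*\Pic^0(Y)$ with $\chi_i\neq 0$, and in particular, since each $V_0(\omega_X,\chi_i)$ is nonempty (Proposition \ref{ch1}(2)) and has positive dimension when $P_{\chi_i}\notin f^*\Pic^0(Y)$ (Proposition \ref{ch1}(3)), there is at least one maximal component $T_{\chi_{1,s}}$ (Definition \ref{5de}) with $\dim(T_{\chi_{1,s}})\geq 1$. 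The key step is then to run the same machinery as in the proof of Theorem \ref{4.11}: by Proposition \ref{5.4}, if $\dim(T_{\chi_{1,s}})=1$ then $T_{\chi_{i,t}}\subset T_{\chi_{1,s}}$ for all $(i,t)$ with $\dim(T_{\chi_{i,t}})>0$, whence by Theorem 2.3 of \cite{CH2} one gets $\Pic^0(Y)=T_{\chi_{1,s}}$, so $\dim(Y)=1$, contradicting $\kappa(X)\geq 2$. Hence $\dim(T_{\chi_{1,s}})\geq 2$.

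Now I apply Lemma \ref{J4} once: since $h^0(X,\omega_X\otimes P)\geq 1$ for all $P\in P_{\chi_{1,s}}+T_{\chi_{1,s}}$ (this is $V_0(\omega_X,\chi_1)$ containing the component through $P_{\chi_{1,s}}$), and by Proposition \ref{ch1}(2) applied to $-\chi_1$ there is $P_0$ with $h^0(X,\omega_X\otimes P_{\chi_{1,s}}^{-1}\otimes P_0)\geq 1$, Lemma \ref{J4} with $T=T_{\chi_{1,s}}$ of dimension $t\geq 2$ yields
$$h^0(X,\omega_X^2\otimes P_0)\geq 1+1+t-1=t+1\geq 3.$$
Since $P_0\in f^*\Pic^0(Y)$ and the Iitaka model of $X$ dominates $Y$, Proposition \ref{J2} gives $P_2(X)=h^0(X,\omega_X^2\otimes P_0)\geq 3$, contradicting $P_2(X)=2$. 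Therefore $\kappa(X)=\dim(Y)\leq 1$; and $\kappa(X)=0$ is excluded because then $P_2(X)=1$ (or one invokes Kawamata's theorem), so $\kappa(X)=1$ and $Y$, being a $1$-dimensional abelian variety, is an elliptic curve.

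The main obstacle is making sure the twisting line bundles are compatible: one must check that $P_{\chi_{1,s}}$ can be taken torsion and that $P_0$ lies in $f^*\Pic^0(Y)$, so that Proposition \ref{J2} applies to push the lower bound on $h^0(X,\omega_X^2\otimes P_0)$ down to $P_2(X)$ itself. This is exactly the bookkeeping done in \S\ref{s14} (every component of $V_0(\omega_X)$ is a torsion translate of a subtorus, and $T_{\chi_{1,s}}\subset f^*\Pic^0(Y)$), so once that is in hand the argument is essentially the $m=2$ specialization of the proof of Theorem \ref{4.11}, with the slight simplification that here we do not even need Lemma \ref{5.10}, only a single application of Lemma \ref{J4}.
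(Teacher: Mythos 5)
Your opening reductions are fine: Proposition \ref{5.1} does give that $Y$ is birational to an abelian variety, the claim $(\dag)$ does give $P_{\chi_i}\notin f^*\Pic^0(Y)$, and the argument that a one-dimensional maximal component forces $\dim(Y)=1$ (via Proposition \ref{5.4} and Theorem 2.3 of \cite{CH2}) is valid --- it is in fact how the proof of Theorem \ref{4.11} opens. The proof breaks at the single application of Lemma \ref{J4}. That lemma requires \emph{both} nonvanishing hypotheses to hold for all $P$ in the \emph{same} subvariety $T$. You verify $h^0(X,\omega_X\otimes P_{\chi_{1,s}}\otimes P)\geq 1$ for all $P\in T_{\chi_{1,s}}$, but for $M=\omega_X\otimes P_{\chi_{1,s}}^{-1}\otimes P_0$ you only know $h^0(X,M)\geq 1$ at the single point $P=\cO_X$: Proposition \ref{ch1}(2) gives nonemptiness of $V_0(\omega_X,-\chi_1)$, and item (3) gives a positive-dimensional component $P_{-\chi_{1,t}}+T_{-\chi_{1,t}}$ of it, but $T_{-\chi_{1,t}}$ has no reason to contain (a translate of) $T_{\chi_{1,s}}$. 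So Lemma \ref{J4} can only be applied over the neutral component $T$ of $T_{\chi_{1,s}}\cap T_{-\chi_{1,t}}$, and Proposition \ref{5.4} only guarantees $\dim(T)\geq 1$. When $\dim(T)\geq 2$ your count gives $P_2(X)\geq 3$ and the contradiction you want; when $\dim(T)=1$ it gives only $h^0\geq 1+1+1-1=2$, which is no contradiction at all.

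That residual case --- a maximal component of dimension $\geq 2$ meeting the relevant component of $V_0(\omega_X,-\chi_1)$ along an elliptic curve --- is precisely where the paper's proof does its real work. It projects to the dual elliptic curve $\widehat{T}$, forms the pushforwards $F_1=\bar f_*(\omega_X\otimes P_{\chi_{i,s}})$, $F_2=\bar f_*(\omega_X\otimes P_{-\chi_{i,t}})$, $F_3=\bar f_*(\omega_X^2\otimes P_{\chi_{i,s}}\otimes P_{-\chi_{i,t}})$, shows $\rank(F_3)>\min\{\rank(F_1),\rank(F_2)\}$, and uses the Harder--Narasimhan filtrations of $F_1,F_2$ together with ampleness of their maximal destabilizing subbundles to force $h^0(\widehat{T},F_3)\geq 3$, contradicting $P_2(X)=2$. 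None of this is dispensable, and your proposal omits it entirely, so the argument as written is not a complete proof.
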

\begin{proof}
We use diagram (\ref{d14}). By Theorem \ref{J1}, $a_X$ is surjective and
hence generically finite. By Proposition \ref{5.1}, we may assume
that $a_Y: Y\rightarrow \Alb(Y)$ is an isomorphism.

If $\dim (Y)=1$, $Y$ is an elliptic curve and we are done. We now assume
that $\dim (Y)\geq 2$ and deduce a contradiction.

Let $T_{\chi_{i,s}}$ be a maximal component of $V_0(\omega_X)$. By
the claim $(\dag)$ in the proof of  Proposition \ref{5.1}, we know
that $P_{\chi_i}\notin f^*\Pic^0(Y)$. By item (3) of Proposition \ref{ch1},
there exist a torsion line bundle $P_{-\chi_{i,t}}\in
P_{\chi_i}^{\vee}+f^*\Pic^0(Y)$ and a positive-dimensional abelian
subvariety $T_{-\chi_{i,t}}\subset f^*\Pic^0(Y)$ such that
$P_{-\chi_{i,t}}+T_{-\chi_{i,t}}$ is a connected component of
$V_0(\omega_X)$. Let $T$ be the neutral component of $T_{\chi_{i,s}}\cap T_{-\chi_{i,t}}$. By
Proposition \ref{5.4}, $\dim (T)\geq 1$.

If $\dim (T)\geq 2$, then by Lemma \ref{J4}, $$h^0(X,
\omega_X^{2}\otimes P\otimes Q)\geq 1+1+2-1=3,$$ for all $P\in
P_{\chi_{i,s}}+T$ and all $Q\in P_{-\chi_{i,t}}+T$. Since
$P_{\chi_{i,s}}\otimes P_{-\chi_{i,t}}\in f^*\Pic^0(Y)$, we
obtain, by Proposition \ref{J2}, $P_2(X)\geq 3$, which is a
contradiction.

Hence $T$ is an elliptic curve and we denote by $\widehat{T}$ its
dual.
 There exists a projection $\pi: Y\rightarrow \widehat{T}$. We then
consider the commutative diagram:
\begin{eqnarray*}
\xymatrix{
X\ar[d]^f\ar[dr]^{\bar{f}}\\
Y\ar[r]^{\pi}&\widehat{T} }
\end{eqnarray*}
and define
\begin{eqnarray*}F_1&=&\bar{f}_*(\omega_X\otimes
P_{\chi_{i,s}}),\\ F_2&=&\bar{f}_*(\omega_X\otimes P_{-\chi_{i,t}}),
\\F_3&=&\bar{f}_*(\omega_X^2\otimes P_{\chi_{i,s}}\otimes
P_{-\chi_{i,t}}).
\end{eqnarray*}
These are vector bundles on the elliptic curve $\widehat{T}$ and by
Corollary 3.6 in \cite{V} and Corollary \ref{J3}, $F_1$ and $F_2$
are nef and $F_3$ is ample.xxx

Since $P_{\chi_{i,s}}\otimes P_{-\chi_{i,t}}\in f^*\Pic^0(Y)$ and
$f: X\rightarrow Y$ is a model of the Iitaka fibration of $X$, we
have $$2=P_2(X)=h^0(X, \omega_X^{2}\otimes P_{\chi_{i,s}}\otimes
P_{-\chi_{i,t}})=h^0(\widehat{T}, F_3).$$

There exists a natural morphism
$$F_1\otimes F_2\xrightarrow{\upsilon} F_3,$$ corresponding to the
multiplication of sections. Let $R_1$, $R_2$, and $R_3$ be the
respective ranks of $F_1$, $F_2$, and $F_3$.
 I claim:
 \begin{itemize}\item[$\spadesuit$] $R_3>\min\{R_1,
R_2\}$.
\end{itemize}

Indeed if $R_1\geq 2$ and $R_2\geq 2$, then by Lemma \ref{J4},
$R_3\geq R_1+R_2-1$. If either $R_1$ or $R_2$ is $1$, we just need
to prove   $R_3\geq 2$. Let $f|_{X_t}: X_t\rightarrow Y_t$ be the
restriction of $f$ to a general fiber of $\bar{f}$. Since $f:
X\rightarrow Y$ is a model of the Iitaka fibration of $X$, fixing an
ample divisor $H$ on $Y$, there exists an integer $N>0$ such
that $NK_X-H$ is effective. Hence $(NK_X-H)|_{X_t}\succeq 0$,
therefore  the Iitaka model of $X_t$ dominates $Y_t$. Indeed,
$f|_{X_t}: X_t\rightarrow Y_t$ is a birational model of the Iitaka
fibration of $X_t$ since a general fiber of $f|_{X_t}$ is isomorphic
to a general fiber of $f$ which is birational to an abelian variety.
As we have assumed   $\dim (Y)\geq 2$, we have $\dim (Y_t)\geq 1$.
Thus $X_t$ is of maximal Albanese dimension and $\kappa(X_t)\geq 1$,
hence $P_2(X_t)\geq 2$ (\cite{CH1}, Theorem 3.2). Since
$(P_{\chi_{i,s}}\otimes P_{-\chi_{i,t}})|_{X_t}$ is pulled back from
$Y_t$, we have
\begin{eqnarray*}R_3=h^0(X_t, (\omega_X^2\otimes P_{\chi_{i,s}}\otimes
P_{-\chi_{i,t}})|_{X_t})=P_2(X_t)\geq 2,\end{eqnarray*} where the
second equality holds again because of Proposition \ref{J2}. This proves the claim $\spadesuit$.
\medskip

Consider the Harder-Narasimhan filtration  of $F_1$ (resp. $F_2$) and
let $G_1$ (resp. $G_2$) be the unique maximal subbundle of $F_1$
(resp. $F_2$) of largest slope. By definition, $G_1$ and $G_2$ are
semistable. Let $r_1$ and $r_2 $ be their respective ranks.
 I claim:
\begin{itemize}
\item[$\clubsuit$]  $r_1>0$ and $r_2>0$ and therefore $G_1$ and $G_2$ are ample.
\end{itemize}
If $\deg(G_1)=0$, we conclude from the definition of $G_1$ that
$0=\mu(G_1)\geq \mu(F_1)$. Since $F_1$ is nef,
$\deg(F)=\mu(F)=0$, hence $V_0(F_1)=V_1(F_1)$. By the generic vanishing theorem (see for example \cite{HAC3}), $V_1(F_1)$ is finite. However, since $T\subset
T_{\chi_{i,s}}$, we have $h^0(F_1\otimes P)>0$ for all $P\in T$,
which is a contradiction. So $r_1>0$  and similarly,
$r_2>0$. Since $G_1$ and $G_2$ are semistable, they are ample
(see the Main Claim in the proof of Theorem 6.4.15 in \cite{Laz}). This proves the claim $\clubsuit $.\medskip

 Set
$G_3=\upsilon(G_1\otimes G_2)$ and let $r_3$ be its rank. Again by
Lemma \ref{J4}, we have $$r_3\geq r_1+r_2-1\geq \max\{r_1, r_2\}.$$
Since $G_1$ and $G_2$ are semistable and ample, so is $G_1\otimes
G_2$ (\cite{Laz}, Corollary 6.4.14). Therefore the slopes
satisfy
$$\mu(G_3)\geq \mu(G_1\otimes G_2)=\mu(G_1)+\mu(G_2),$$ and $G_3$ is also ample.

We then apply Riemann-Roch, \begin{eqnarray*}h^0(\widehat{T},
G_3)&\geq& r_3(\mu(G_1)+\mu(G_2))\\
&\geq& \deg(G_1)+\deg(G_2)\\
&\geq& 2,
\end{eqnarray*}
where the second inequality holds because $r_3\geq \max\{r_1, r_2\}$
and the third inequality holds because $\deg(G_1)>0$ and
$\deg(G_2)>0$.

Since $G_3$ is a subbundle of $F_3$ and $h^0(\widehat{T}, F_3)=2$,
we have $h^0(\widehat{T}, G_3)=2$, hence all the inequalities above
should be equalities. In particular, $r_3=r_1=r_2$. Hence by the
claim $\spadesuit$, $r_3\leq \min\{R_1, R_2\}<R_3$. Therefore,
$F_3/G_3$ is a sheaf of rank $\geq 1$. Since $F_3$ is ample, so is
$F_3/G_3$, hence $h^0(\widehat{T}, F_3/G_3)\geq 1$. Since $G_3$ is
ample, $h^1(\widehat{T}, G_3)=0$. Hence,
$$h^0(\widehat{T}, F_3)=h^0(\widehat{T}, G_3)+h^0(\widehat{T},
F_3/G_3)\geq 3,$$ which is a contradiction. Thus $\dim (Y)=1$. This
finishes the proof of Proposition \ref{5.5}.
\end{proof}

\begin{rema}\label{5.8}\upshape
It is easy to see that combining Proposition \ref{5.4} and the proof
of Proposition \ref{5.5}, one can give another proof of Chen and
Hacon's characterization of abelian varieties (\cite{CH1}): {\em
  a smooth projective variety $X$ with maximal Albanese
dimension and $ P_2(X)=1$  is birational to an abelian
variety.}
\end{rema}

The following theorem is the main result of this article. It describes explicitly all  smooth projective varieties $X$ with $q(X)=\dim (X)$ and  $P_2(X)=2$.

\begin{theo}\label{5}Let $X$ be a smooth projective variety with $P_2(X)=2$
and $q(X)=\dim (X)$. Then $\kappa(X)=1$ and $X$ is birational to a
quotient $(K\times C)/G$, where $K$ is an abelian variety and $C$ is
a smooth projective curve, $G$ is a finite group that acts diagonally and freely on
$K\times C$, and $C\rightarrow C/G$ is branched at $2$ points.
\end{theo}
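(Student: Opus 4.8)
The plan is to leverage everything that has been set up: by Proposition~\ref{5.5} we already know $\kappa(X)=1$ and that a model $f\colon X\to C$ of the Iitaka fibration is onto an elliptic curve $C$, with $a_X$ surjective and generically finite. So the real content is to upgrade this fibration into a diagonal quotient structure. First I would analyze the structure of $V_0(\omega_X)$ using \S\ref{s14}: the general fiber of $f$ is birational to an abelian variety $\widetilde K$, the kernel $K$ of $f_*\colon\Alb(X)\to\Alb(C)=C$ is an abelian variety, and $\overline G=G/f^*\Pic^0(C)$ is a finite group. I expect to show, essentially as in the Claims of Theorem~\ref{4.11}, that $\overline G\simeq\Z/2$, i.e.\ there is exactly one nontrivial character $\chi$; this uses $P_2(X)=2$ together with Lemma~\ref{J4}, Proposition~\ref{J2}, Corollary~\ref{J3} and the Chen--Hacon facts in Proposition~\ref{ch1}, since any richer cohomological support locus would force $P_2(X)\geq 3$.

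Next I would identify the abelian variety $K$ concretely. The key point is that $a_X$ restricted to a general fiber of $f$ is birational, so $X$ is birationally an abelian-by-curve fibration; I want to show $X$ is birational to a fiber bundle over $C$ with fiber $\widetilde K$. Here I would pass to the étale cover $t\colon\widetilde X\to X$ induced by an étale cover of $\Alb(X)$ trivializing the torsion line bundle $P_\chi$; on $\widetilde X$ the character disappears, $V_0(\omega_{\widetilde X})$ becomes ``trivial'' relative to the Iitaka fibration, and one should be able to conclude that $\widetilde X$ is birational to $K'\times C'$ for an abelian variety $K'$ and an étale cover $C'\to C$ (this is exactly the $P_2=1$ / abelian-fibration situation, cf.\ Remark~\ref{5.8} and Kawamata's theorem applied fiberwise). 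The deck group of $\widetilde X\to X$ then acts, and because the cover came from $\Alb(X)$ and respects $f$, it acts diagonally on $K'\times C'$; since $t$ is étale the action is free. This realizes $X$ birationally as $(K'\times C')/G$ with $G$ acting diagonally and freely.

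Finally I would pin down the branching of $C'\to C'/G$. Set $C=C'/G$ (after replacing $C$ by a birational — here isomorphic, since curves — model). The branch locus of $C'\to C$ is controlled by the fact that $C$ carries the Iitaka fibration data: $f_*\omega_X^{2}$, up to the $\Z/2$-twist, is a line bundle of small degree on $C$, and $P_2(X)=h^0(C,f_*(\omega_X^{2}\otimes P_\chi^{2}))=h^0(C,f_*(\omega_X^{2}))=2$ forces $\deg$ of the relevant pushforward to be exactly what a double-point branching gives. Concretely, $\omega_{X}^{2}$ restricted along the fibration contributes $\omega_C^{2}$ twisted by a divisor supported on the branch points with the standard Hurwitz coefficients, and matching $P_2=2$ against the genus of $C$ (necessarily $g(C)\geq 2$, since $\kappa(X)=1$ needs $\omega_C$-positivity along $f$) forces the branch divisor of $C'\to C$ to have length $2$. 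I would phrase this via the canonical bundle formula for the fibration $X\to C$: $K_X\sim_{\mathbb Q} f^*(K_C+\Delta)+(\text{fiberwise trivial})$ with $\Delta$ the discriminant contribution, and $P_2(X)=2$ pins $\deg(2(K_C+\Delta))$, hence $\deg\Delta$, hence the number of branch points of $C'\to C$ to be $2$.

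The main obstacle I anticipate is the middle step: going from ``birationally an abelian-variety-fibration over $C$ with the right cohomological data'' to an honest diagonal free quotient $(K\times C)/G$. Producing the abelian variety $K$ (rather than just a fibration whose fibers are abelian) requires trivializing the fibration after an étale base change, which is where one must invoke the rigidity of abelian varieties together with the generic-vanishing structure of $V_0(\omega_{\widetilde X})$ on the cover; controlling how the deck transformations descend and checking the action is genuinely diagonal (not just fiber-preserving) is the delicate bookkeeping. Everything else — the $\overline G\simeq\Z/2$ dichotomy and the branch-point count — should follow the now-familiar pattern of playing $P_2(X)=2$ against Lemma~\ref{J4} and Corollary~\ref{J3}, as in the proofs of Theorems~\ref{4.11} and Proposition~\ref{5.5}.
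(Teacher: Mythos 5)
Your first step (Proposition \ref{5.5} gives $\kappa(X)=1$ and an elliptic curve as Iitaka base) and your last step (counting branch points by playing $P_2(X)=2$ against a pushforward to the elliptic curve) match the paper. But there are two genuine problems in the middle. First, the structural claim you yourself flag as ``the main obstacle'' --- passing from an abelian-variety fibration over an elliptic curve to an honest diagonal free quotient $(K\times C)/G$ --- is exactly where the paper does no work at all: it invokes Kawamata's Theorem 13 of \cite{KA}, which already asserts that such an $X$ is birational to $(\widetilde K\times C)/G$ with $G$ a finite abelian group acting faithfully on both factors and diagonally and freely on the product. Your sketch of reproving this via an \'etale cover trivializing $P_\chi$, ``rigidity of abelian varieties,'' and descent of deck transformations is not a proof; as written it is essentially a restatement of the theorem you would need to cite or establish, so the proposal has a real gap at its core.

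Second, your intermediate claim that $\overline G\simeq\mathbb{Z}/2$ is false, and the theorem deliberately does not assert it: take $C$ a genus-$3$ curve with a $\mathbb{Z}/3$-action whose quotient is an elliptic curve $E$ with two totally ramified branch points, let $\mathbb{Z}/3$ act on an abelian variety $K$ by translation by a point of order $3$, and set $X=(K\times C)/(\mathbb{Z}/3)$; then $q(X)=\dim(X)$ and $P_2(X)=h^0(C,\omega_C^2)^{\mathbb{Z}/3}=h^0(E,\cO_E(P_1+P_2))=2$, yet $\overline G\simeq\mathbb{Z}/3$. The arguments that forced $\overline G\simeq\mathbb{Z}/2$ in Theorem \ref{4.11} (Claims 1 and 2) all presuppose $\kappa(X)\geq 2$, a two-dimensional maximal component $T_{\chi_{1,s}}$, and Lemma \ref{5.10}, none of which are available once $\kappa(X)=1$; so this part of your plan is not just unnecessary but wrong, and it would derail the descent argument you build on it (the ``$\mathbb{Z}/2$-twist''). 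Once Kawamata's structure theorem is in hand, the correct endgame is the paper's: $2=P_2(X)=\dim H^0(C,\omega_C^2)^G=h^0\bigl(Y,\cO_Y\bigl(\sum_P\lfloor 2(1-\tfrac1{e_P})\rfloor P\bigr)\bigr)$, and since $\lfloor 2(1-\tfrac1{e_P})\rfloor=1$ for every branch point and $Y$ is elliptic, the number of branch points is exactly $2$. Your canonical-bundle-formula variant of this count would work, but it is the easy part.
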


\begin{proof}Since we know by Proposition \ref{5.5} that $Y$ is an elliptic curve, the proof is parallel to the proof of Theorem 5.1 in
\cite{CH4}. By \cite{KA}, Theorem 13, there exists a curve $C$ of genus
$g\geq 2$, an abelian variety $\KK$, and a finite abelian group $G$,
which acts faithfully on $C$ and $\KK$, such that $X$ is birational
to $(\KK\times C)/G$, where $G$ acts diagonally and freely on
$\KK\times C$.

We then consider the induced morphism $\phi: C\rightarrow C/G=Y$.
Following \cite{Be}, \S VI.12, we have $$2=P_2(X)=\dim (
H^0(C, \omega_C^{2})^G)=h^0(Y, \cO_Y(\sum_{P\in
Y}\big\lfloor2(1-\frac{1}{e_P})\big\rfloor P)),$$ where $P$ is a
branch point of $\phi$, and $e_P$ is the ramification index of a
ramification point lying over $P$.

Since $\big\lfloor2(1-\frac{1}{e_P}) \big\rfloor=1$ for any branch
point $P$, we have only two branch points.
\end{proof}
\begin{exam}\upshape
Let $C$ be a bi-elliptic curve of genus $2$, let $\phi: C\rightarrow E$ be
the morphism such that $\phi$ is branched at two points, and let $\tau$ be
the induced involution. Take  an abelian variety  $K$ and set
$G=\mathbb{Z}_2$. Let $G$ act on $C$ by $\tau$ and on $K$ by
translation by a point of order $2$. Set $X=(K \times  C)/G$, where
$G$ acts diagonally. Then $P_2(X)=h^0(C, \omega_C^{2})^{\tau}=2$ (this construction actually gives all varieties with $q(X)=\dim (X)$ and
$P_3(X)=2$; see \cite{HAC1}).
\end{exam}
\begin{rema}\upshape
The family of varieties with   $q(X)=\dim (X)$ and  $P_2(X)=2$ is not bounded   (see Example 1 in \cite{CH4}).
\end{rema}

 \subsection*{Acknowledgements}
I am grateful to my thesis advisor, O. Debarre, for
his generous help.


\begin{thebibliography}{CH3}
\bibitem[B]{Be} A. Beauville, {\it  Complex algebraic surfaces,} London Math. Soc. Student
Text  {\bf 34}, Cambridge University Press, 1992.

\bibitem[CH1]{CH1} J.A. Chen and C.D. Hacon, Characterization of abelian
varieties, {\it Invent. Math.} {\bf 143} (2001), 435--447.

\bibitem[CH2]{CH2} J.A. Chen and C.D. Hacon, Pluricanonical maps of
varieties of maximal Albanese dimension, {\it Math. Ann.} {\bf 320}
(2001), 367--380.

\bibitem[CH3]{CH4} J.A. Chen and C.D. Hacon, Varieties with $P_3(X)=4$
and $q(X)=\dim(X)$, {\it Ann. Sc. Norm. Super. Pisa}  {\bf 3}
(2004), 399--425.
\bibitem[D]{D} O. Debarre, On coverings of simple abelian varieties,
{\it Bull. Soc. Math. France} {\bf 134} (2006), 253--260.
\bibitem[EL]{EL} L. Ein and R. Lazarsfeld, Singularities of theta
divisors and the birational geometry of irregular varieties, {\it J.
Amer. Math. Soc.} {\bf 10} (1997), 243--258.
\bibitem[GL1]{GL1} M. Green and R. Lazarsfeld, Deformation theory,
generic vanishing theorems, and some conjectures of Enriques,
Catanese and Beauville, {\it Invent. Math.} {\bf 90} (1987),
389--407.
\bibitem[GL2]{GL2} M. Green and R. Lazarsfeld, Higher obstructions to
deforming cohomology groups of line bundles, {\it J. Amer. Math.
Soc.} {\bf 4} (1991), 87--103.
\bibitem[H1]{Ha} C. D. Hacon, Varieties with $P_3=3$ and $q(X)=\dim(X)$, {\it Math. Nachr.}  {\bf 278} (2005),
409--420.
\bibitem[H2]{HAC3} C.D. Hacon, A derived category approach to
generic vanishing, {\it J. Reine Angew. Math.} {\bf 575} (2004),
173--187.
\bibitem[HP]{HAC1} C.D. Hacon and R. Pardini, On the birational
geometry of varieties of maximal Albanese dimension, {\it J. Reine
Angew. Math.} {\bf 546} (2002), 177--199.
\bibitem[J]{jiang} Z. Jiang, An effective version of a theorem of Kawamata on the Albanese map, to appear in {\it Commun.   Contemp. Math}.
\bibitem[K]{KA} Y. Kawamata, Characterization of abelian varieties,
{\it Compos. Math.} {\bf 43} (1981), 253--276.
\bibitem[Ko1]{K3} J. Koll\'{a}r, Higher direct images of dualizing
sheaves I, {\it Ann. of Math.} {\bf 123} (1986), 11--42.
\bibitem[Ko2]{K4} J. Koll\'{a}r, Higher direct images of dualizing
sheaves II, {\it Ann. of Math.} {\bf 124} (1986), 171--202.
\bibitem[L]{Laz} R. Lazarsfeld, {\it Positivity in algebraic geometry I \& II}, Ergebnisse der Mathematik und
ihrer Grenzgebiete {\bf48} and {\bf49}, Springer-Verlag, Heidelberg,
2004.
\bibitem[M]{Mo} S. Mori, Classification of higher-dimensional
varieties, {\it Algebraic Geometry, Browdoin 1985}, {Proc. Symp.
Pure Math.  } {\bf 46}, 1987, 269--331.
\bibitem[S]{S} C. Simpson, Subspaces of moduli spaces of rank one
local systems, {\it Ann. Sci. \'{E}cole. Norm. Sup. (4)} {\bf 26}
(1993), 361--401.
\bibitem[V]{V} E. Viehweg, {\it Positivity of direct image sheaves and
applications to famillies of higher dimensional manifolds},
ICTP-Lecture Notes {\bf 6} (2001), 249--284.
\end{thebibliography}
\end{document}